\numberwithin{equation}{section}
\theoremstyle{plain}
\newtheorem{thm}{\protect\theoremname}[section]
\theoremstyle{definition}
\newtheorem{condition}[thm]{\protect\conditionname}
\theoremstyle{definition}
\newtheorem{defn}[thm]{\protect\definitionname}
\theoremstyle{remark}
\newtheorem{rem}[thm]{\protect\remarkname}
\theoremstyle{plain}
\newtheorem{prop}[thm]{\protect\propositionname}
\theoremstyle{plain}
\theoremstyle{definition}
\newtheorem{example}[thm]{\protect\examplename}
\theoremstyle{remark}
\newtheorem*{acknowledgement*}{\protect\acknowledgementname}
\newtheorem{cor}{Corollary}[section] 
\newtheorem*{proposition*}{Proposition}
\providecommand{\acknowledgementname}{Acknowledgement}
\providecommand{\conditionname}{Condition}
\providecommand{\definitionname}{Definition}
\providecommand{\examplename}{Example}
\providecommand{\lemmaname}{Lemma}
\providecommand{\propositionname}{Proposition}
\providecommand{\remarkname}{Remark}
\providecommand{\theoremname}{Theorem}
\numberwithin{equation}{section}
\theoremstyle{plain}
\theoremstyle{remark}
\newtheorem*{rem*}{\protect\remarkname}
\theoremstyle{plain}
\theoremstyle{plain}
\newtheorem*{prop*}{\protect\propositionname}
\theoremstyle{definition}
\theoremstyle{plain}
\newtheorem*{thm*}{\protect\theoremname}
\theoremstyle{remark}
\begin{document}
\title[M\"{o}bius-Type Structures in Singular Semi-Riemannian Manifolds]{M\"{o}bius-Type Structures in Non-Orientable Singular Semi-Riemannian
Manifolds}
\author{Nathalie E. Rieger}
\address{Department of Mathematics, University of Zurich, Winterthurerstrasse
190, 8057 Zurich, Switzerland}
\curraddr{Mathematics Department, Yale University, 219 Prospect Street, New
Haven, CT 06520, USA}
\email{n.rieger@yale.edu}

\begin{abstract}
Our objective is to illuminate the global structure of non-orientable
manifolds with signature-changing metrics, with particular emphasis on global topological obstructions. Using explicit geometric constructions
based on the topology of the M\"{o}bius strip, we produce examples
of crosscap manifolds where the gluing junction coincides with the locus
of signature change. 
Our main result shows that on non-orientable compact surfaces, the radical of such metrics cannot be everywhere transverse along the hypersurface of signature change. In particular, metrics arising from the transformation prescription $\tilde{g}=g+fV^{\flat}\otimes V^{\flat}$, with $g$ a Lorentzian metric and $f$ a smooth interpolation function, necessarily fail to satisfy the transversality condition. This obstruction is of purely global origin and is closely related to topological invariants such as the Euler characteristic and the non-existence of nowhere-vanishing vector fields. These results demonstrate that non-orientability imposes intrinsic limitations on the class of admissible signature-type changing metrics.
\end{abstract}

\keywords{M\"{o}bius strip, causality, singular semi-Riemannian geometry, Lorentzian geometry, signature change, time orientability, singular metric, quantum cosmology}

\maketitle
\address{Department of Mathematics, Yale University, USA}

\email{n.rieger@yale.edu}

\section{Introduction}

We explore manifolds with signature change, in which a Riemannian region transitions to a Lorentzian region. 
At the locus of signature change, exactly one eigenvalue of the metric tensor vanishes: continuity of the eigenvalue spectrum requires one eigenvalue to pass through zero across the transition. Consequently, the metric becomes degenerate on this hypersurface, and its inverse no longer exists. Thus, any continuous signature-changing metric is necessarily degenerate there~\cite{Dray - Gravity and Signature Change}. Such metrics are referred to as singular metrics.

\textbf{~}\\ Formally, we study a smooth, symmetric $(0,2)$-tensor field $\tilde{g}$
on a smooth manifold $M$ that becomes degenerate on a smoothly embedded
hypersurface $\mathcal{H}\subset M$. The bilinear type of $\tilde{g}$
changes upon crossing $\mathcal{H}$. Of particular interest are the
cases where the radical of $\tilde{g}$ at points of $\mathcal{H}$
is transverse to the hypersurface.

\textbf{~}\\ This framework is of significant physical relevance, particularly in theories of quantum gravity and quantum cosmology. Signature change is a core feature in models relying on Wick rotation (e.g., Euclidean quantum gravity and the no-boundary proposal) and is used to calculate instanton transition rates between different phases of the early universe. Our mathematical study of the geometric constraints imposed by non-orientable topologies is thus motivated by the need for rigorous foundation and analysis of novel phenomena in these physical contexts.

~

This article builds on the framework developed in~\cite{Hasse + Rieger-Transformation, Hasse + Rieger-Local Transformation, Hasse + Rieger-Loops},
in particular the Transformation Theorem, which asserts that (locally)
the metric $\tilde{g}$ of a transverse signature-changing manifold
$(M,\tilde{g})$ is equivalent to a metric obtained from some Lorentzian
metric $g$:

\begin{thm*}[Global Transformation Theorem, transverse radical]
\label{Transformation-Theorem-(global)-1}
Let $M$ be a transverse, signature-type changing manifold of $\dim(M)=n\geq2$,
which admits in $M_{R}\cup\mathcal{H}$ a smoothly defined non-vanishing
line element field that is transverse to the boundary $\mathcal{H}$.
Then the metric $\tilde{g}$ associated with a signature-type changing
manifold $(M,\tilde{g})$ is a transverse, type-changing metric with
a transverse radical if and only if $\tilde{g}$ is obtained from
a Lorentzian metric $g$ via the Transformation Prescription $\tilde{g}=g+fV^{\flat}\otimes V^{\flat}$,
where, for all $q\in \mathcal H \coloneqq f^{-1}(1)=\{p\in M\colon f(p)=1\}$,
\[
df(q)\neq0 \quad \text{and} \quad (df(V))(q)\neq0.
\]
\end{thm*}

We also rely on the Transformation Prescription, which
provides a procedure to transform an arbitrary Lorentzian manifold
into a singular signature-changing manifold. 

\begin{prop*}[Transformation Prescription]
\label{Proposition Transformation-Prescription-1}
Let $(M,g)$ be a (not necessarily time-orientable) Lorentzian manifold
of $\dim n\geq2$. Then we obtain a signature-type changing metric
$\tilde{g}$ via the Transformation Prescription $\tilde{g}=g+fV^{\flat}\otimes V^{\flat}$,
where $f\colon M\longrightarrow\mathbb{R}$ is a smooth transformation
function and $V$ is one of the unordered pair $\{V,-V\}$
of a global smooth non-vanishing line element field.
\end{prop*}

We assume the reader's
familiarity with these results. Our aim here is to extend the study
of manifolds with signature change, with particular emphasis on constructions
that exhibit non-orientable global structures. To illuminate the global structure of non-orientable signature-changing
manifolds, we construct explicit examples modeled on the topology
of the M\"{o}bius strip. By gluing diffeomorphic, connected nonempty
boundary components of smooth manifolds with collar neighborhoods,
we obtain new manifolds whose topology resembles that of a crosscap,
with the gluing locus serving as the hypersurface of signature change.

\textbf{~}\\ Some of our examples lie outside the class introduced in~\cite{Hasse + Rieger-Transformation, Hasse + Rieger-Local Transformation}, as they give rise to metrics whose radical is of mixed character, i.e., the radical of the metric transitions from transverse to tangent at $\mathcal{H}$. We then apply the Transformation Theorem
to the M\"{o}bius strip, transforming it from a Lorentzian manifold
into a signature-type changing manifold with a transverse radical. 

~

Finally, we investigate whether the resulting metrics can be written
in the form 
\[
\tilde{g}=g+f(V^{\flat}\otimes V^{\flat}),
\]
where $f$ is a smooth transformation function that interpolates between
the Riemannian and Lorentzian regions, $g$ is some Lorentzian metric,
and $V^{\flat}$ denotes the metric dual of a vector field $V$. Since $V$ is a line element field that remains non-zero at every point 
in $M$, it can be normalized such that $g_{\mu\nu}V^{\mu}V^{\nu}=-1$.\footnote{Here ``timelike'' refers to the Lorentzian metric $g$.}
The hypersurface of signature change then appears as $\mathcal{H}=f^{-1}(1)$.
This perspective highlights the diversity of possible structures for
non-orientable manifolds with signature change.

\medskip
\noindent\textbf{Main result (informal).}
The principal result of this paper is a global obstruction to the existence
of signature-type changing metrics with a transverse radical on non-orientable
compact manifolds. In particular, we show that on manifolds with the topology
of the crosscap, any metric obtained via the transformation prescription
\[
\tilde{g} = g + f(V^\flat \otimes V^\flat)
\]
necessarily fails to have a radical that is everywhere transverse along the
hypersurface of signature change.
\medskip

This shows that the existence of transverse type-changing metrics is not
purely a local phenomenon, but is fundamentally constrained by global
topological properties such as non-orientability and Euler characteristic.

\subsection{Preliminaries and fundamental definitions}

Let $(M,\tilde{g})$ be a smooth manifold of dimension $\dim M\geq2$.
Following~\cite{Kossowksi + Kriele - Signature type change and absolute time in general relativity},
we call $(M,\tilde{g})$ a \textit{transverse type-changing singular
semi-Riemannian manifold} if it satisfies the following condition.

\begin{condition}
The metric $\tilde{g}$ is a smooth, symmetric, $(0,2)$-tensor
field on $M$ which is degenerate on $\mathcal{H}$. Moreover, $\tilde{g}$ is a codimension-$1$\textbf{
}\textit{transverse type-changing metric}, meaning that 
\[
d(\det(\tilde{g}_{\mu\nu}))_{q}\neq0\text{ for all } q\in\mathcal{H},
\]
and for any local coordinate system $\xi=(x^{0},\ldots,x^{n-1})$
around $q$. As a consequence, the degeneracy locus $$\{q\in M\!\!:\tilde{g}\!\!\mid_{q} is\;degenerate\}=:\mathcal{H}\subset M$$
is a smoothly embedded hypersurface in $M$. The bilinear type of
$\tilde{g}$ changes when crossing $\mathcal{H}$.
\end{condition}

At each point $q\in\mathcal{H}$, there exists a one-dimensional radical
$\textrm{Rad}_{q}\subset T_{q}M$, defined as the subspace 
\[
\textrm{Rad}_{q}:=\{w\in T_{q}M\mid\tilde{g}_{q}(w,v)=0\text{ for all }v\in T_{q}M\}.
\]
The radical may be either tangent or transverse to $\mathcal{H}$.

\begin{defn}[Character of the radical]
We say that the radical of a metric is transverse to $\mathcal{H}$ if, at every point of 
$\mathcal{H}$, the radical distribution is not contained in $T\mathcal{H}$. If it is contained in 
$T\mathcal{H}$ at every point, we call it purely tangent. In all remaining cases---when it is transverse at some points and tangent at others---we say that the radical has mixed character; equivalently, it is non-transverse with respect to $\mathcal{H}$.
\end{defn}

To focus on the transverse case, we impose the following condition.

\begin{condition}\label{transverse radical}
The radical $\textrm{Rad}_{q}$ is transverse for all $q\in\mathcal{H}$,
i.e. 
\[
\textrm{Rad}_{q} \oplus T_{q}\mathcal{H}=T_{q}M\text{ for all }q\in\mathcal{H}.
\]
Equivalently, $\textrm{Rad}_{q}$ is not tangent to $\mathcal{H}$
at any point $q$, i.e., $\textrm{Rad}_{q}\not\subset T_{q}\mathcal{H}$.
\end{condition}

We also reiterate the definition for pseudo-timelike curves~\cite{Hasse + Rieger-Loops}.

\begin{defn}[Pseudo-timelike curve]
\textbf{\label{Definition Pseudo-timelike curve}}Let $M=M_{L}\cup\mathcal{H}\cup M_{R}$ be an $n$-dimensional
transverse type-changing singular semi-Riemannian manifold, $g$ be
a symmetric type-changing metric, and $\mathcal{H}$ the locus of
signature change. Let $\gamma\colon[a,b]\to M$ be a continuous and
differentiable curve, with $[a,b]\subset\mathbb{R}$ and $-\infty<a<b<\infty$.
We call $\gamma=\gamma^{\mu}(u)=x^{\mu}(u)$ in $M$ a pseudo-timelike
(respectively, pseudo-spacelike) curve if 

~\linebreak{}
(i) $\textrm{Im}(\gamma)\cap M_{L}\neq\varnothing$, i.e. $\gamma$
has image points in the Lorentzian region; and 

~\linebreak{}
(ii) for every generalized affine parametrization of $\gamma$ in
$M_{L}$ there exists $\varepsilon>0$ such that $g(\gamma',\gamma')<-\varepsilon$
(respectively, $g(\gamma',\gamma')>\varepsilon$).
\end{defn}

\subsubsection{Orientability and pseudo-orientability}

Before proceeding, we recall several orientability notions relevant
to signature-changing manifolds.

\begin{defn}
~\cite{Bott + Tu} A smooth $n$-dimensional manifold $M$ is\textit{
orientable} if and only if it admits a smooth global nowhere vanishing
$n$-form (also called a top-ranked form).
\end{defn}

We also recall the following notions from~\cite{Hasse + Rieger-Loops},
where $(M,\tilde{g})$ denotes a smooth signature-changing manifold
(possibly with boundary). We assume that one connected
component of $M\setminus\mathcal{H}$ is Riemannian, denoted by $M_{R}$,
while all other connected components $(M_{L_{\alpha}})_{\alpha\in I}\subseteq M_{L}\subset M$
are Lorentzian, where 
\[
M_{L}:=\underset{\alpha\in I}{\bigcup}M_{L_{\alpha}}
\]
represents the Lorentzian region. Furthermore, we assume throughout that the
point set $\mathcal{H}$, where $\tilde{g}$ becomes degenerate, is not empty.

\begin{defn}[Pseudo-timelike] \label{def:Pseudo-timelike}A vector field
$V$ on $(M,\tilde{g})$ is \textit{pseudo-timelike} if and only if
$V$ is timelike in $M_{L}$ and its integral curves are pseudo-timelike.
\end{defn}

\begin{defn} [Pseudo-time orientable] \label{def:Pseudo-time-orientable}
A signature-type changing manifold $(M,\tilde{g})$ is \textit{pseudo-time
orientable} \textcolor{black}{if and only if} the Lorentzian region
$M_{L}$ is time orientable.
\end{defn}

\begin{defn}[Pseudo-space orientable] \label{def:Pseudo-space-orientable}
A signature-type changing manifold $(M,\tilde{g})$ of dimension $n$
is \textit{pseudo-space orientable} if and only if it admits a continuous
non-vanishing spacelike $(n-1)$-frame field on $M_{L}$. This is
a set of $n-1$ pointwise orthonormal spacelike vector fields on $M_{L}$.
\end{defn}

\subsection{Summary of main results}

The first result concerns the causal structure of the rotating Minkowski model
introduced in Subsection~\ref{subsec:Rotating-Minkowski-Metric}. We show that, with respect to the chosen global
time orientation, the null boundaries of the stationary stripes act as one-way
causal barriers: future-directed causal curves entering the even stripes \(M_{2k}\)
cannot leave them, whereas the odd stripes \(M_{2k-1}\) cannot be entered, a property that plays a crucial role in understanding the causal
behavior of the crosscap metric studied later. 

\begin{prop}[One-way causal barriers]
\label{prop:trapped curves}
Let $(M,g)$ be the two-dimensional manifold $M=\mathbb{R}^2$ with coordinates $(t,x)$
equipped with the ``rotating Minkowski'' metric
\[
g = -\cos(2\varphi)\,(dt)^2
    + 2\sin(2\varphi)\,dt\,dx
    + \cos(2\varphi)\,(dx)^2,
\qquad \varphi=\pi x.
\]
For each $k\in\mathbb{Z}$ define the stationary stripe
\[
M_k:=\left\{(t,x)\in M:
k-\frac14 < x < k+\frac14
\right\}.
\]
Then $\partial_t$ is a Killing field and is timelike on each $M_k$.

Let the time orientation be induced by the global timelike vector field
\[
V=(\cos\varphi)\,\partial_t-(\sin\varphi)\,\partial_x .
\]
With respect to this time orientation, the null boundary components of the
stationary stripes are one-way causal barriers. More precisely, every
future-directed causal curve which enters an even stationary stripe $M_{2k}$
cannot leave it, whereas no future-directed causal curve can enter an odd
stationary stripe $M_{2k-1}$.
\end{prop}

The second result clarifies the orientability properties of manifolds that
undergo a change of signature. Even when both pseudo-time and pseudo-space
orientability hold, orientability in the usual sense may still fail.

\begin{prop}[Orientability] 
\label{prop:orientable+timeorientable-not_orientable}
Even if a transverse, signature-type changing manifold $(M,g)$ with
a transverse radical is pseudo-time orientable and pseudo-space orientable,
it is not necessarily orientable.
\end{prop}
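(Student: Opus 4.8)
The statement is an existence claim about a counterexample, so the plan is to exhibit one explicit manifold that is pseudo-time orientable, pseudo-space orientable, and carries a transverse type-changing metric with transverse radical, yet fails to be orientable. The natural candidate is the M\"{o}bius strip $M$ equipped with a signature-changing metric built from the rotating Minkowski metric of Proposition~\ref{prop:trapped curves}. Because the metric components there are invariant under $x\mapsto x+1$ while the light cones rotate by $\pi$ over each unit interval, the data descend to a M\"{o}bius quotient in which the orientation reversal of $M$ is encoded by the same twist that rotates the causal cones. I would take the hypersurface of signature change $\mathcal{H}$ to be the core circle of the strip, choosing the interpolation so that $\tilde g$ is Riemannian on one side and Lorentzian on the other.

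First I would verify that $(M,\tilde g)$ satisfies the Condition and Condition~\ref{transverse radical}: check that $d(\det\tilde g_{\mu\nu})$ is nonzero along $\mathcal{H}$ in a chart adapted to the core circle, so that $\mathcal{H}$ is a smoothly embedded degeneracy hypersurface, and that the one-dimensional radical along $\mathcal{H}$ is transverse, i.e.\ $\mathrm{Rad}_q\oplus T_q\mathcal{H}=T_qM$. This is a local computation in a collar of the core and is essentially inherited from the transverse model, so it should be routine once the metric is written down.

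The heart of the argument is a topological observation about where the non-orientability lives. The complement $M\setminus\mathcal{H}$ of the core circle deformation retracts onto the boundary circle of the strip, and the boundary of a M\"{o}bius band is freely homotopic to \emph{twice} the core generator of $\pi_1(M)\cong\mathbb{Z}$. Hence every loop in $M\setminus\mathcal{H}$ represents an even multiple of the core and is therefore orientation-preserving, so $M\setminus\mathcal{H}$ is an orientable annulus. In particular the Lorentzian region $M_L\subseteq M\setminus\mathcal{H}$ is an orientable Lorentzian surface. On such a region I would produce a global nowhere-vanishing timelike vector field---using the Killing field $\partial_t$, which by Proposition~\ref{prop:trapped curves} is timelike on the stationary stripes---to establish pseudo-time orientability in the sense of Definition~\ref{def:Pseudo-time-orientable}, and a global nowhere-vanishing spacelike unit vector field (the $g$-orthogonal complement, normalized) to establish pseudo-space orientability in the sense of Definition~\ref{def:Pseudo-space-orientable}.

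Finally, I would invoke the classical fact that the M\"{o}bius strip is non-orientable: it admits no global nowhere-vanishing $2$-form, equivalently its orientation double cover is connected. Combining the three points---transverse type change with transverse radical, pseudo-time and pseudo-space orientability of $M_L$, and non-orientability of $M$---yields the claim. The main obstacle I anticipate is arranging the metric so that all requirements hold \emph{simultaneously}: the signature-change locus must be positioned so that the orientation-reversing generator is forced to cross $\mathcal{H}$ and thus cannot obstruct the frames on $M_L$, while $\mathcal{H}$ remains a genuine transverse degeneracy hypersurface. Reconciling the causal cone rotation with the M\"{o}bius identification and the collar structure near $\mathcal{H}$ is the delicate part; everything else reduces to the local checks and the standard topology of the strip.
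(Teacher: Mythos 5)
There is a genuine gap, in two places.

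First, your placement of the signature-change locus at the core circle of the M\"obius strip cannot work. The transverse type-changing condition $d(\det\tilde g_{\mu\nu})\neq 0$ along $\mathcal{H}$ forces $\det\tilde g$ to change sign across $\mathcal{H}$, so $\mathcal{H}$ must locally (and here globally) separate a Lorentzian side ($\det<0$) from a Riemannian side ($\det>0$). But the core circle of a M\"obius band is one-sided: its complement is a connected annulus --- the very fact you exploit in your ``topological observation''. On that connected complement $\det\tilde g$ is continuous and nonvanishing, hence of constant sign, contradicting the sign change required at $\mathcal{H}$. So the core circle can never be the degeneracy hypersurface of a transverse type-changing metric; the paper instead leaves $f$ general and takes $\mathcal{H}=f^{-1}(1)$ to be a two-sided hypersurface compatible with the identification (cf.\ Figure~\ref{fig:hypersurface-alignment}).

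Second, your choice of $\partial_t$ as the timelike field establishing pseudo-time orientability fails: by the very computation in Proposition~\ref{prop:trapped curves}, $g(\partial_t,\partial_t)=-\cos(2\pi x)$, so $\partial_t$ is timelike only inside the stationary stripes and becomes null and then spacelike as $x$ varies. It is therefore not a timelike vector field on the Lorentzian region unless $M_L$ is confined to a single stripe, which your core-circle setup does not arrange. The paper's proof hinges on the \emph{rotating} unit frame $V=(\cos\varphi)\,\partial_t-(\sin\varphi)\,\partial_x$ with $g(V,V)=-1$ everywhere: this field is globally timelike, descends to the M\"obius quotient because its rotation is synchronized with the identification $(t,x)\sim((-1)^k t,\,x+k)$, and its $g$-orthonormal complement $E_1$ supplies the spacelike $1$-frame for pseudo-space orientability. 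With those two fields in hand, non-orientability of the M\"obius strip finishes the argument; neither the orientability of $M\setminus\mathcal{H}$ nor the causal trapping of Proposition~\ref{prop:trapped curves} is needed.
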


Finally, in Subsection~\ref{subsec:Compact Moebius-Strip} we analyze in detail the explicit crosscap model
constructed via the adjunction space description in
Equation~\eqref{eq: adjunction space}. The following proposition summarizes the
main structural properties of this metric, including the behavior of the
radical and the precise manner in which signature change occurs. 

\begin{prop}[Radical and signature change on the crosscap]
\label{prop:crosscap-proposition}
Let $(C,g)$ be the crosscap (see Eq.~\ref{eq: adjunction space}) with metric  \[ g = (1-t^{2})\,dt^{2} + 2tx\,dt\,dx + (1-x^{2})\,dx^{2}, \] and define the degeneracy locus \[ \mathcal{H} = \{ (t,x) \in C : t^{2}+x^{2}=1 \}. \]
Then: \begin{enumerate} \item $(C,g)$ is a \emph{signature-changing manifold}, with Lorentzian signature in the region $t^{2}+x^{2} > 1$ and Riemannian signature in $t^{2}+x^{2} < 1$. \item The \emph{radical} at each point $q \in \mathcal{H}$ is 1-dimensional: \[ \mathrm{Rad}_{q} = \mathrm{span} \Bigl\{ (1, \sqrt{1-t^{2}}/t)^{T} \Bigr\} = \mathrm{span} \Bigl\{ (\sqrt{1-x^{2}}/x,1)^{T} \Bigr\}, \] where $T$ denotes the transpose of the matrix with respect to the chosen coordinates. \item The radical is of mixed character with respect to $\mathcal{H}$ ($\mathrm{Rad}_{q}$ is tangent iff $t^2 = x^2 = \frac{1}{2}$), with the radical being tangent at a measure-zero set. \end{enumerate}
Consequently, $(C,g)$ provides a compact example of a signature-changing manifold that \emph{cannot} be obtained from a Lorentzian manifold via the Transformation Prescription. 
\end{prop}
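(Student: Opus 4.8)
The plan is to treat all three items as explicit linear-algebra computations in the global coordinates $(t,x)$, in which the metric is the symmetric matrix
\[
g=\begin{pmatrix} 1-t^{2} & tx \\ tx & 1-x^{2}\end{pmatrix},
\]
and then read off the geometric consequences. For item (1) I would first compute $\det g=(1-t^{2})(1-x^{2})-(tx)^{2}=1-t^{2}-x^{2}$. This vanishes exactly on the circle $t^{2}+x^{2}=1$, i.e. on $\mathcal{H}$, and $d(\det g)=-2t\,dt-2x\,dx$ is nonzero there, so $\mathcal{H}$ is a regular level set and hence a smoothly embedded hypersurface, across which $\det g$ changes sign. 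Combining this with the trace, where $\det g<0$ the form is indefinite (Lorentzian), while where $\det g>0$ the trace $\operatorname{tr}g=2-t^{2}-x^{2}>1$ is positive and the form is positive-definite (Riemannian); this establishes the signature change.

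For item (2) I would restrict the matrix to $\mathcal{H}$. Using $1-t^{2}=x^{2}$ and $1-x^{2}=t^{2}$ there, $g$ becomes the rank-one outer product
\[
\begin{pmatrix} x^{2} & tx \\ tx & t^{2}\end{pmatrix}=\begin{pmatrix} x \\ t\end{pmatrix}\begin{pmatrix} x & t\end{pmatrix},
\]
whose kernel is one-dimensional and spanned by $(t,-x)^{T}$. Since $\sqrt{1-t^{2}}=|x|$ and $\sqrt{1-x^{2}}=|t|$ on $\mathcal{H}$, dividing this generator by $t$ (respectively by $-x$) recovers the two stated forms $(1,\sqrt{1-t^{2}}/t)^{T}$ and $(\sqrt{1-x^{2}}/x,1)^{T}$ up to the square-root branch, confirming that $\mathrm{Rad}_{q}=\mathrm{span}\{(t,-x)^{T}\}$ is genuinely $1$-dimensional everywhere on $\mathcal{H}$, including the axis points $t=0$ or $x=0$ where the span form stays well-defined.

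For item (3) the tangent line to $\mathcal{H}$ at $q=(t,x)$ is spanned by $(-x,t)^{T}$, orthogonal to the radial gradient. The radical is tangent precisely when $(t,-x)^{T}\parallel(-x,t)^{T}$, i.e. when $\det\begin{pmatrix} t & -x \\ -x & t\end{pmatrix}=t^{2}-x^{2}=0$. Together with $t^{2}+x^{2}=1$ this forces $t^{2}=x^{2}=\tfrac12$, giving exactly the four points $(\pm\tfrac{1}{\sqrt2},\pm\tfrac{1}{\sqrt2})$, while the radical is transverse everywhere else on $\mathcal{H}$. Hence the radical is transverse on a full-measure subset and tangent only on a finite, measure-zero set, so it is of mixed character.

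Finally, for the concluding sentence I would invoke the characterization of the Transformation Prescription from \cite{Hasse + Rieger-Transformation, Hasse + Rieger-Local Transformation}: its output is always a transverse type-changing metric whose radical satisfies Condition~\ref{transverse radical} and is therefore transverse at \emph{every} point of $\mathcal{H}$. Since $(C,g)$ violates this at the four tangency points, it cannot lie in the image of the Prescription, and compactness of the crosscap $C$ then delivers the desired compact counterexample. The matrix computations are routine; the step requiring genuine care is this last one, namely isolating the precise feature of the Transformation Prescription (pointwise transversality of the radical) that the mixed-character phenomenon obstructs, since the entire force of the proposition rests on that implication rather than on the elementary algebra.
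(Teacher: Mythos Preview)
Your argument is correct and matches the paper's approach closely: both compute $\det g=1-t^{2}-x^{2}$ for item~(1), identify the radical on $\mathcal H$ as $\mathrm{span}\{(t,-x)^{T}\}$ (you via the rank-one factorization $\bigl(\begin{smallmatrix}x\\t\end{smallmatrix}\bigr)(x\ \ t)$, the paper by completing the square to $(xw_{1}+tw_{2})^{2}=0$), and detect tangency exactly at $t^{2}=x^{2}=\tfrac12$ by comparing with the tangent direction of $\mathcal H$. One remark worth making explicit: your determinant/trace analysis---like the paper's own proof---actually shows the metric is Riemannian for $t^{2}+x^{2}<1$ and Lorentzian for $t^{2}+x^{2}>1$, so the two regions in item~(1) are labeled the wrong way round in the proposition as stated.
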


These results illustrate the range of geometric behaviors that may occur in the
presence of signature change, and they motivate the systematic analysis of
signature-type changing metrics developed in the following section.

\section{Non-orientable models and signature change}

Based on the method introduced in~\cite{Hasse + Rieger-Transformation},
any Lorentzian manifold $(M,g)$ can be transformed into a signature-changing
manifold with metric $\tilde{g}$ by means of the Transformation Prescription~(\ref{eq:Transformation Prescription}), using a non-vanishing timelike
line element field $\{V,-V\}$ together with a smooth function $f\colon M\longrightarrow\mathbb{R}$.
Ideally, one assumes the existence of a smooth, non-vanishing timelike
vector field $V$. Motivated by this approach, we analyze certain
classes of signature-changing metrics and investigate whether they
can be expressed in the form
\begin{equation}
\tilde{g}=g+fV^{\flat}\otimes V^{\flat},\label{eq:Transformation Prescription}
\end{equation}
where $f$ is a smooth transformation function interpolating between
the Lorentzian and Riemannian regions. The hypersurface of signature
change then appears as $\mathcal{H}=f^{-1}(1)$.

\subsection{The Lorentzian background: rotating Minkowski metric\label{subsec:Rotating-Minkowski-Metric}}

Let $M=\mathbb{R}^{2}$ with the standard topology and the canonical
Minkowski metric 
\[
\eta=-(dt)^{2}+(dx)^{2},
\]
where $(t,x)$ are Cartesian coordinates and let $E_{0}$, $E_{1}$ denote
the Gaussian basis vector fields. We restrict attention to two-dimensional
models, although additional flat directions could be included without
affecting curvature (corresponding to a flat plane with a vanishing
Riemannian curvature tensor).

~

To describe a rotation of the standard coordinate vector fields $(E_{0},E_{1}$)
clockwise by an angle $\varphi$, we define the new, non-holonomic
basis 

\[
F_{0}=-(\sin\varphi)E_{1}+(\cos\varphi)E_{0},\quad F_{1}=(\cos\varphi)E_{1}+(\sin\varphi)E_{0},
\]
where $\varphi$ is a smooth function of $(t,x)$. We now require
that the metric 
\[
g=g_{00}(dt)^{2}+2g_{01}dtdx+g_{11}(dx)^{2}
\]
 satisfies 
\[
g(F_{1},F_{1})=-g(F_{0},F_{0})=1,\quad g(F_{0},F_{1})=0.
\]

\textbf{~}\\ Solving this nonhomogeneous linear equation system gives
the unique solution

\[
g_{11}=-g_{00}=\cos^{2}\varphi-\sin^{2}\varphi=\cos(2\varphi),\text{ and }g_{01}=2\cos\varphi\sin\varphi=\sin(2\varphi).
\]

\textbf{~}\\ Hence the metric takes the form

\[
g=-\cos(2\varphi)(dt)^{2}+2\sin(2\varphi)dtdx+\cos(2\varphi)(dx)^{2},
\]
a smooth Lorentzian metric on $\mathbb{R}^{2}$ whose light cones
rotate clockwise by an angle $\varphi$. We refer to this
informally as the ``rotating Minkowski'' metric. Note, however,
that unlike Minkowski space, $(M,g)$ is not flat. 

\begin{rem}
The same metric can be obtained by starting with a non-symmetric
tensor and extracting its symmetric part. Explicitly, one may set
\end{rem}

\[
g=\ensuremath{-[(\cos\varphi)dx+(\sin\varphi)dt]\otimes[-(\sin\varphi)dx+(\cos\varphi)dt]}
\]
\[
=(\sin\varphi)(\cos\varphi)dx\otimes dx-(\cos^{2}\varphi)dx\otimes dt+(\sin^{2}\varphi)dt\otimes dx-(\sin\varphi)(\cos\varphi)dt\otimes dt,
\]
and then replace $g$ by its symmetrization $g_{(ab)}=\frac{1}{2}(g_{ab}+g_{ba})$
by producing a new symmetric tensor $g_{(ab)}$ from the old one,
based on 
\[
\underset{g}{\underbrace{g_{ab}}}=\underset{symmetric}{\underbrace{g_{(ab)}}}+\underset{skew-symmetric}{\underbrace{g_{[ab]}}}=\frac{1}{2}(g_{ab}+g_{ba})+\frac{1}{2}(g_{ab}-g_{ba}).
\]
The determinant of $g$ is $\det([g_{\mu\nu}])=-\cos^{2}(2\varphi)-\sin^{2}(2\varphi)=-1$,
which is negative and non-vanishing. Therefore, $g$ is Lorentzian.
A natural choice is $\varphi=\pi x$, which yields a metric in which
the light cones rotate clockwise in the $tx$-plane as one moves
in the $x$-direction. At integer values $x=k,k\in\mathbb{Z}$,
the metric reduces to the canonical Minkowski form $g=-1(dt)^{2}+1(dx)^{2}$.

~ 

Relative to an initial choice of time orientation, the causal cones
swap past and future orientation at odd integers $x\in\{2k-1:k\in\mathbb{Z}\}$,
while they realign with their initial orientation at even integers.
Thus, a full rotation of the light cone (with respect to the basis
vectors $E_{0}$ and $E_{1}$) occurs after an additional displacement
of the absolute value $1$ along the $x$-direction. Hence, a full
rotation occurs only at $x\in\{2k:k\in\mathbb{Z}\}$, where the past
and future of the light cones align with their orientation at $x=0$,
i.e. along the $t$-axis. The Lorentzian manifold $(M,g)$ is nevertheless
both orientable and time-orientable (see Subsection~\ref{subsec:Transformation-Prescription for non-compact}
for the definition of the global non-vanishing, timelike vector field
$V$).

\subsubsection{Causal Structure and Causal Trapping} 

The causal structure can be elucidated by means of the
null directions obtained from

\[
g=0 
 \overset{\cdot\frac{1}{(dx)^{2}}}{\Longleftrightarrow}g=-\cos(2\varphi)\frac{(dt)^{2}}{(dx)^{2}}+2\sin(2\varphi)\frac{dt}{dx}+\cos(2\varphi)=0,
\]
a quadratic equation for $\frac{dt}{dx}$ of the form $ax^{2}+bx+c=0$.
The two solutions are 

\[
\frac{dt}{dx}=\begin{cases}
\begin{array}{c}
\frac{-1+\sin(2\varphi)}{\cos(2\varphi)}=\frac{\sin(\varphi)-\cos(\varphi)}{\sin(\varphi)+\cos(\varphi)}\\
\frac{1+\sin(2\varphi)}{\cos(2\varphi)}=\frac{\sin(\varphi)+\cos(\varphi)}{\cos(\varphi)-\sin(\varphi)}
\end{array} & \Longleftrightarrow dt=\begin{cases}
\begin{array}{c}
\frac{\sin(\varphi)-\cos(\varphi)}{\sin(\varphi)+\cos(\varphi)}dx\\
\frac{\sin(\varphi)+\cos(\varphi)}{\cos(\varphi)-\sin(\varphi)}dx
\end{array} & .\end{cases}\end{cases}
\]

\textbf{~}\\ For $\varphi=\pi x$, these integrate to

\[
\begin{array}{c}
t(x)=-\frac{1}{\pi}\log\left|\sin(\pi x)+\cos(\pi x)\right|+\textrm{const,}\\
t(x)=-\frac{1}{\pi}\log\left|\cos(\pi x)-\sin(\pi x)\right|+\textrm{const.}
\end{array}
\]

\textbf{~}\\The Christoffel symbols can be expressed as periodic functions of
$x$. The non-vanishing Christoffel symbols are given by

~

$\Gamma_{tt}^{t}=-\frac{1}{2}g^{tx}\frac{d}{dx}g_{tt}=-\pi\sin^{2}(2\varphi)$,

$\Gamma_{xx}^{x}=-\Gamma_{tx}^{t}=-\Gamma_{tt}^{x}=\pi\sin(2\varphi)\cos(2\varphi)=\frac{\pi}{2}\sin(4\varphi)$, 

$\Gamma_{tx}^{x}=-\Gamma_{tt}^{t}=\pi\sin^{2}(2\varphi)=\frac{\pi}{2}(1-\cos(4\varphi))$,

$\Gamma_{xx}^{t}=-\pi(1+\cos^{2}(2\varphi))=-\frac{\pi}{2}(3+\cos(4\varphi))$.

\textbf{~}\\ All Christoffel symbols are linear combinations of $\sin(4\varphi)$
and $\cos(4\varphi)$, hence are periodic in $x$ with period $1/2$.
The scalar curvature in two dimensions (where it completely
characterizes the curvature) is given by 

\[
R=2K=\ensuremath{2(\varphi''\sin(2\varphi)+2\varphi'^{2}\cos^{3}(2\varphi)),}
\]
where $\varphi' := \frac{d\varphi}{dx} = \pi$, $\varphi'' = 0$, and
$K$ denotes the Gaussian curvature, which is bounded.

~

At first glance, $(M,g)$ looks deceivingly innocent. Moreover, the
underlying $2$-manifold is $M=\mathbb{R}^{2}$ with the standard
topology, so we are able to choose an atlas that consists of a single
chart equipped with the identity map. The causal structure is revealed
by the Killing vector fields. Since $g$ is independent of the coordinate
$t$, the vector field ${\displaystyle W=\partial_{t}}$ is a Killing
field (Figure~\ref{RMM-1} provides a hint about the appearance of
the Killing vector fields). However, its causal character varies periodically
with $x$: it is timelike on intervals 
\[
\frac{1}{4}(4\pi k-\pi)<\varphi<\frac{1}{4}(4\pi k+\pi)\Longleftrightarrow\frac{1}{4}(4k-1)<x<\frac{1}{4}(4k+1),\;k\in\mathbb{Z},
\]
becomes null at $x=\frac{k}{2}-\frac{1}{4}$, $k\in\mathbb{Z}$, and
spacelike on alternating intervals, $\frac{1}{4}(4k+1)<x<\frac{1}{4}(4k+3),k\in\mathbb{Z}$.
Accordingly, $(M,g)$ has a \textquotedblleft stripe-like\textquotedblright{}
causal structure: stationary regions $M(k)\coloneqq\{(t,x)\in M;\frac{1}{4}(4k-1)<x<\frac{1}{4}(4k+1)\},k\in\mathbb{Z}$,
where $\partial_{t}$ is timelike, alternate with non-stationary regions
$\{(t,x)\in M;\frac{1}{4}(4k+1)<x<\frac{1}{4}(4k+3)\}$ for $k\in\mathbb{Z}$,
separated by lightlike curves. Moreover, the scalar curvature $R$
is positive in stationary regions and negative in non-stationary regions.

\begin{figure}[H]
    \centering
    \includegraphics[scale=0.99]{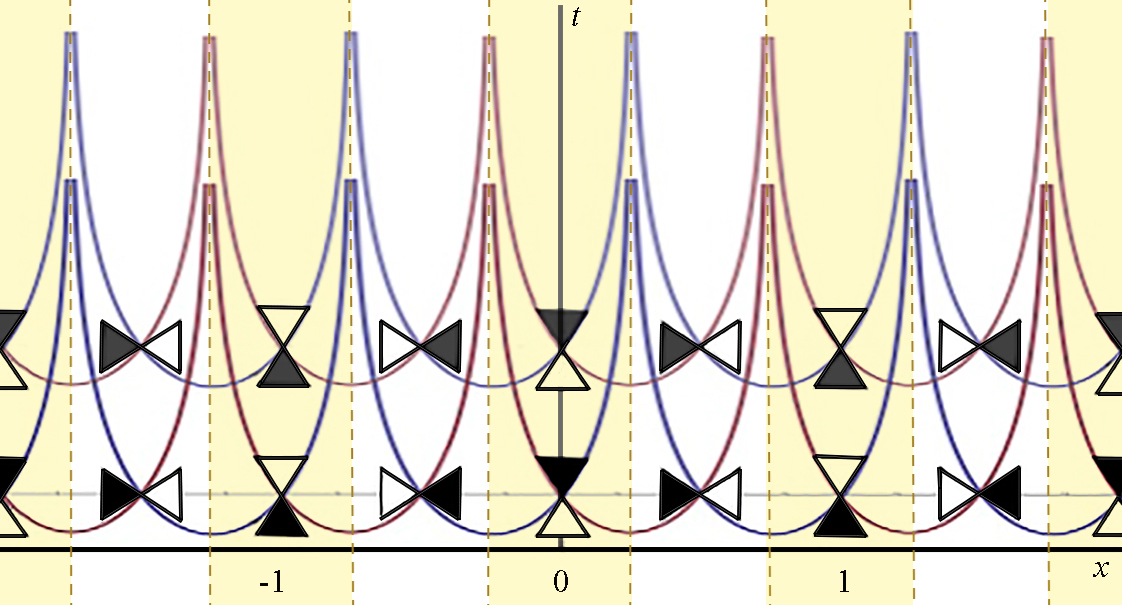}
    \caption{\small \label{RMM-1} The ``rotating Minkowski'' metric 
    $g=-\cos(2\varphi)\,(dt)^{2}+2\sin(2\varphi)\,dt\,dx+\cos(2\varphi)\,(dx)^{2}$
    on $\mathbb{R}^{2}$. The manifold has a ``stripe-like causal pattern''
    consisting of alternating stationary and non-stationary stripes (regions),
    such that adjacent stripes are separated by lightlike curves.}
\end{figure}

Figure~\ref{RMM-1} illustrates this alternating stripe pattern,
with stationary regions (yellow) bounded by lightlike curves (dashed).
If $\partial_{t}$ in these regions is taken to be future-pointing, then causal curves
that enter the stationary regions $M(2k)$, $k\in\mathbb{Z}$, become
trapped there, whereas no causal curve can enter the stationary regions
$M(2k-1)$, $k\in\mathbb{Z}$. We summarize the preceding discussion in the following Proposition~\ref{prop:trapped curves}.

\begin{prop*}[One-way causal barriers]
Let $(M,g)$ be the two-dimensional manifold $M=\mathbb{R}^2$ with coordinates $(t,x)$
equipped with the ``rotating Minkowski'' metric
\[
g = -\cos(2\varphi)\,(dt)^2
    + 2\sin(2\varphi)\,dt\,dx
    + \cos(2\varphi)\,(dx)^2,
\qquad \varphi=\pi x.
\]
For each $k\in\mathbb{Z}$ define the stationary stripe
\[
M_k:=\left\{(t,x)\in M:
k-\frac14 < x < k+\frac14
\right\}.
\]
Then $\partial_t$ is a Killing field and is timelike on each $M_k$.

Let the time orientation be induced by the global timelike vector field
\[
V=(\cos\varphi)\,\partial_t-(\sin\varphi)\,\partial_x .
\]
With respect to this time orientation, the null boundary components of the
stationary stripes are one-way causal barriers. More precisely, every
future-directed causal curve which enters an even stationary stripe $M_{2k}$
cannot leave it, whereas no future-directed causal curve can enter an odd
stationary stripe $M_{2k-1}$.
\end{prop*}

\begin{proof}
The metric coefficients are independent of the coordinate $t$, hence
\[
K:=\partial_t
\]
is a Killing vector field. Moreover,
\[
g(K,K)=g(\partial_t,\partial_t)=-\cos(2\varphi)
=-\cos(2\pi x).
\]
Thus \(K\) is timelike precisely where \(\cos(2\pi x)>0\), i.e. on the open
stripes
\[
M_k=\left\{(t,x)\in M:
k-\frac14 < x < k+\frac14
\right\}.
\]

It remains to analyze the causal character of the boundary components of these
stripes. The boundary of \(M_k\) consists of the two null lines
\[
L_k:=\left\{x=k-\frac14\right\},
\qquad
R_k:=\left\{x=k+\frac14\right\}.
\]
Let
\[
u=a\,\partial_t+b\,\partial_x
\]
be the tangent vector of a future-directed causal curve. We use the global time
orientation determined by
\[
V=(\cos\varphi)\partial_t-(\sin\varphi)\partial_x,
\]
so that \(u\) is future-directed if
\[
g(V,u)\leq 0.
\]

First consider the right boundary \(R_k\), where \(x=k+\frac14\). Then
\[
\sin(2\pi x)=1,
\qquad
\cos(2\pi x)=0.
\]
Hence
\[
g(u,u)=2ab.
\]
The causal condition \(g(u,u)\leq 0\) therefore gives
\[
ab\leq 0.
\]
Moreover, at \(R_k\) we have
\[
V=\frac{(-1)^k}{\sqrt2}\bigl(\partial_t-\partial_x\bigr),
\]
and hence
\[
g(V,u)=\frac{(-1)^k}{\sqrt2}(b-a).
\]
Thus future-directedness gives
\[
(-1)^k(b-a)\leq 0.
\]

\textbf{~}\\ If \(k\) is even, this implies \(b\leq a\). Together with \(ab\leq0\), it follows
that \(b\leq0\). Thus, at the right boundary of an even stripe, a future-directed
causal curve can cross only in the direction of decreasing \(x\), i.e. into the
stripe, not out of it.

\textbf{~}\\ If \(k\) is odd, future-directedness gives \(a\leq b\). Together with \(ab\leq0\),
this implies \(b\geq0\). Hence, at the right boundary of an odd stripe, a
future-directed causal curve can cross only in the direction of increasing \(x\),
i.e. out of the stripe, not into it.

Now consider the left boundary \(L_k\), where \(x=k-\frac14\). Then
\[
\sin(2\pi x)=-1,
\qquad
\cos(2\pi x)=0.
\]
Hence
\[
g(u,u)=-2ab,
\]
and the causal condition gives
\[
ab\geq0.
\]
At \(L_k\) we have
\[
V=\frac{(-1)^k}{\sqrt2}\bigl(\partial_t+\partial_x\bigr),
\]
and therefore
\[
g(V,u)=\frac{(-1)^k}{\sqrt2}(-a-b).
\]
Thus future-directedness gives
\[
(-1)^k(-a-b)\leq0.
\]

\textbf{~}\\ If \(k\) is even, this becomes \(a+b\geq0\). Together with \(ab\geq0\), it follows
that \(b\geq0\). Thus, at the left boundary of an even stripe, a future-directed
causal curve can cross only in the direction of increasing \(x\), i.e. into the
stripe, not out of it.

\textbf{~}\\ If \(k\) is odd, future-directedness gives \(a+b\leq0\). Together with \(ab\geq0\),
it follows that \(b\leq0\). Hence, at the left boundary of an odd stripe, a
future-directed causal curve can cross only in the direction of decreasing \(x\),
i.e. out of the stripe, not into it.

Consequently, for each even stripe \(M_{2k}\), both boundary components are
future one-way causal barriers directed inward. Hence any future-directed causal
curve entering \(M_{2k}\) cannot leave it. Conversely, for each odd stripe
\(M_{2k-1}\), both boundary components are future one-way causal barriers directed
outward. Hence no future-directed causal curve can enter \(M_{2k-1}\).
\end{proof}

\subsection{Non-Compact Models: The Infinite M\"{o}bius strip}

\subsubsection{General application of the Transformation Prescription\label{subsec:Transformation-Prescription for non-compact}}

As established in Subsection~\ref{subsec:Rotating-Minkowski-Metric},
our example $(M,g)$ is a non-compact, orientable, and time-orientable
Lorentzian manifold. Consequently, it admits a global, non-vanishing
timelike vector field. By an appropriate choice, we take 
\begin{equation}
V=(\cos\varphi)\frac{\partial}{\partial t}-(\sin\varphi)\frac{\partial}{\partial x}\label{eq: vectorfield}
\end{equation}
as a global timelike vector field with respect to the metric 
\begin{equation}
g=-\cos(2\varphi)(dt)^{2}+2\sin(2\varphi)dtdx+\cos(2\varphi)(dx)^{2},\quad\varphi=\pi x.\label{eq: metric}
\end{equation}
A direct computation shows that $g(V,V)=-1$.

\textbf{~}\\ Given this vector field $V$, we consider the class
of signature-changing metrics

\[
\tilde{g}=g+fV^{\flat}\otimes V^{\flat},
\]
where $f$ is an arbitrary smooth function and $V^{\flat}=g(V,\centerdot)$
is the index-lowering (morphism) of $V\in\mathfrak{X}(M)$. The resulting
metric depends on the specific choice of $V$; however, in this case,
the above $V$ may be regarded as a canonical choice for $(M,g)$.
Carrying out the calculation yields

\[
\tilde{g}=g+f\cdot\left(g((\cos\varphi)\frac{\partial}{\partial t}-(\sin\varphi)\frac{\partial}{\partial x},\centerdot)\otimes g((\cos\varphi)\frac{\partial}{\partial t}-(\sin\varphi)\frac{\partial}{\partial x},\centerdot)\right)
\]

\[
\Longleftrightarrow\tilde{g}=(-\cos(2\varphi)(dt)^{2}+2\sin(2\varphi)dtdx+\cos(2\varphi)(dx)^{2})
\]

\[
+f\cdot\left(\cos^{2}(\varphi)(dt)^{2}-2\sin(\varphi)\cos(\varphi)dtdx+\sin^{2}(\varphi)(dx)^{2}\right),
\]
which explicitly takes the form

\[
\tilde{g}=(f\cdot\cos^{2}(\varphi)-\cos(2\varphi))(dt)^{2}+(2-f)\sin(2\varphi)dtdx+(f\cdot\sin^{2}(\varphi)+\cos(2\varphi))(dx)^{2}.
\]
Equivalently, in matrix representation:

\[
[\tilde{g}_{\mu\nu}]=\left(\begin{array}{cc}
f\cdot\cos^{2}(\varphi)-\cos(2\varphi) & (1-\frac{f}{2})\sin(2\varphi)\\
(1-\frac{f}{2})\sin(2\varphi) & f\cdot\sin^{2}(\varphi)+\cos(2\varphi)
\end{array}\right).
\]

\textbf{~}\\ The determinant of the metric is easily computed as 
\[
\det([\tilde{g}_{\mu\nu}])=f-1,
\]
so the metric $\tilde{g}$ is degenerate precisely at those points $p$ for which $f(p)=1$. Hence, the hypersurface of signature change is the $(n-1)$-dimensional
submanifold 
\[
\mathfrak{\mathcal{H}:=}f^{-1}(1)=\{p\in M\colon f(p)=1\}.
\]

\textbf{~}\\ To obtain a transverse type-changing semi-Riemannian
manifold, we must impose the condition 
\begin{equation}
d(\det([\tilde{g}_{\mu\nu}]))=d(f-1)\neq0,\quad\forall q\in\mathcal{H},\label{eq: condition1}
\end{equation}
in every local coordinate system $\xi=(x^{0},\ldots,x^{n-1})$ around
$q$.  
\textbf{~}\\In the present example this simply means
\[
d(f-1)=\frac{\partial f(t,x)}{\partial t}dt+\frac{\partial f(t,x)}{\partial x}dx=df\neq0, \qquad \forall  q=(t,x)\in\mathcal{H}.
\]
In addition, the radical must not be tangent to $\mathcal{H}$
for any $q\in\mathcal{H}$. Since for each $q\in\mathcal{H}$ the tangent space $T_{q}\mathcal{H}$ is the
kernel of the map $df_{q}\colon T_{q}M\longrightarrow T_{1}\mathbb{R}$, the condition 
\begin{equation}
(V(f))(q)=((df)(V))(q)\neq0\qquad\text{for all } q\in\mathcal{H}\label{eq: condition2}
\end{equation}
ensures that $V\notin T_{q}\mathcal{H}$, and therefore $V$ is nowhere tangent to $\mathcal{H}$.

\begin{rem}
To ensure the transversality of $\mathrm{Rad}_{q}$, one must have
$V(f)\neq 0$ for all points $p\in\mathcal{H}$. With the given vector
field $V$ (see Equation~(\ref{eq: vectorfield})), this condition becomes, for every
$p\in\mathcal{H}$,
\[
(df)\!\left((\cos\varphi)\frac{\partial}{\partial t}
           -(\sin\varphi)\frac{\partial}{\partial x}\right)\neq 0
\quad\Longleftrightarrow\quad
(\cos\varphi)\frac{\partial f}{\partial t}
\neq
(\sin\varphi)\frac{\partial f}{\partial x}.
\]
Since $f$ is constant along $\mathcal{H}$, with $f(t,x)=1$, we obtain
there (assuming the transversality of the radical)
\[
\frac{dx}{dt}
  = \frac{\frac{\partial f}{\partial t}}
         {\frac{\partial f}{\partial x}}
  \neq -\tan(\varphi)
  = -\tan(\pi x).
\]

Whether this can be achieved by a suitable choice of $f$ in the cases
under consideration depends, among other things, on the required
periodicity conditions. One possible approach is to begin with a purely
tangential radical and solve the differential equation
\[
\frac{dx}{dt}=-\tan(\pi x).
\]
Integrating yields
\[
t=-\int\cot(\pi x)\,dx
  = -\frac{1}{\pi}\ln\lvert \sin(\pi x)\rvert + \mathrm{const}.
\]
To ensure that this relation is never satisfied, one may shift the graph
of the function $t$ (considered as a function of $x$) slightly in the
$x$-direction, obtaining
\[
t=-\frac{1}{\pi}\ln\lvert \sin(\pi(x+\varepsilon))\rvert
  + \mathrm{const}.
\]

In this situation, the Conditions~(\ref{eq: condition1}) and~(\ref{eq: condition2}) reduce simply to the
requirement that
\[
df\neq 0 \qquad\text{for all } q\in\mathcal{H}.
\]
\end{rem}

\subsubsection{Construction of the signature-changing infinite M\"{o}bius strip \label{subsec:Open-Infinite-Moebius}}

We again take $M=\mathbb{R}^{2}$ with the standard topology and the
metric 
\[
g=-\cos(2\varphi)(dt)^{2}+2\sin(2\varphi)dtdx+\cos(2\varphi)(dx)^{2},\quad\varphi=\pi x,
\]
as in Subsection~\ref{subsec:Rotating-Minkowski-Metric}. 

\textbf{~}\\ The M\"{o}bius strip need not be time-orientable; this
depends on the identification of opposite edges. To obtain a time-orientable
model, consider $M=\mathbb{R}\times\mathbb{R}$ (the infinite strip
of width $(-\infty,\infty)$), equipped with the
standard topology and the metric $g$. Then we fold the manifold by
defining the quotient map 
\[
q\colon\mathbb{R}\times\mathbb{R}\longrightarrow(\mathbb{R}\times\mathbb{R})/\sim,\qquad(t,x)\sim(\tilde{t},\tilde{x})\Longleftrightarrow(\tilde{t},\tilde{x})=((-1)^{k}t,x+k),\quad k\in\mathbb{Z}.
\]
The resulting Lorentzian manifold 
\[
M_{\infty}\coloneqq(M/\sim,g)
\]
is non-compact, non-orientable, but time-orientable, and has the topology
of an open infinite M\"{o}bius strip, globally distinct from Minkowski
space.

\textbf{~}\\ Since $M_{\infty}$ is time-orientable, it admits a
global timelike vector field 
\[
V=(\cos\varphi)\frac{\partial}{\partial t}-(\sin\varphi)\frac{\partial}{\partial x}.
\]
Hence, by applying the Transformation Prescription~(\ref{eq:Transformation Prescription}),
we obtain the signature-changing M\"{o}bius strip 
\begin{equation}
\tilde{M}_{\infty}=(M/\sim,\tilde{g}),\label{eq:signature-changing Moebius strip}
\end{equation}
where
\begin{equation}
\tilde{g}=(f\cos^{2}(\varphi)-\cos(2\varphi))(dt)^{2}+(2-f)\sin(2\varphi)dtdx+(f\sin^{2}(\varphi)+\cos(2\varphi))(dx)^{2},\label{eq:signature-changing Moebius metric}
\end{equation}
with the same quotient identification as above, $(\tilde{t},\tilde{x})=((-1)^{k}t,x+k)$,
$k\in\mathbb{Z}$, and $f\in C^{\infty}(M)$ determining the hypersurface
of signature change $\mathfrak{\mathcal{H}=}f^{-1}(1)$. 

\begin{figure}[H]
\centering{}\includegraphics[scale=0.75]{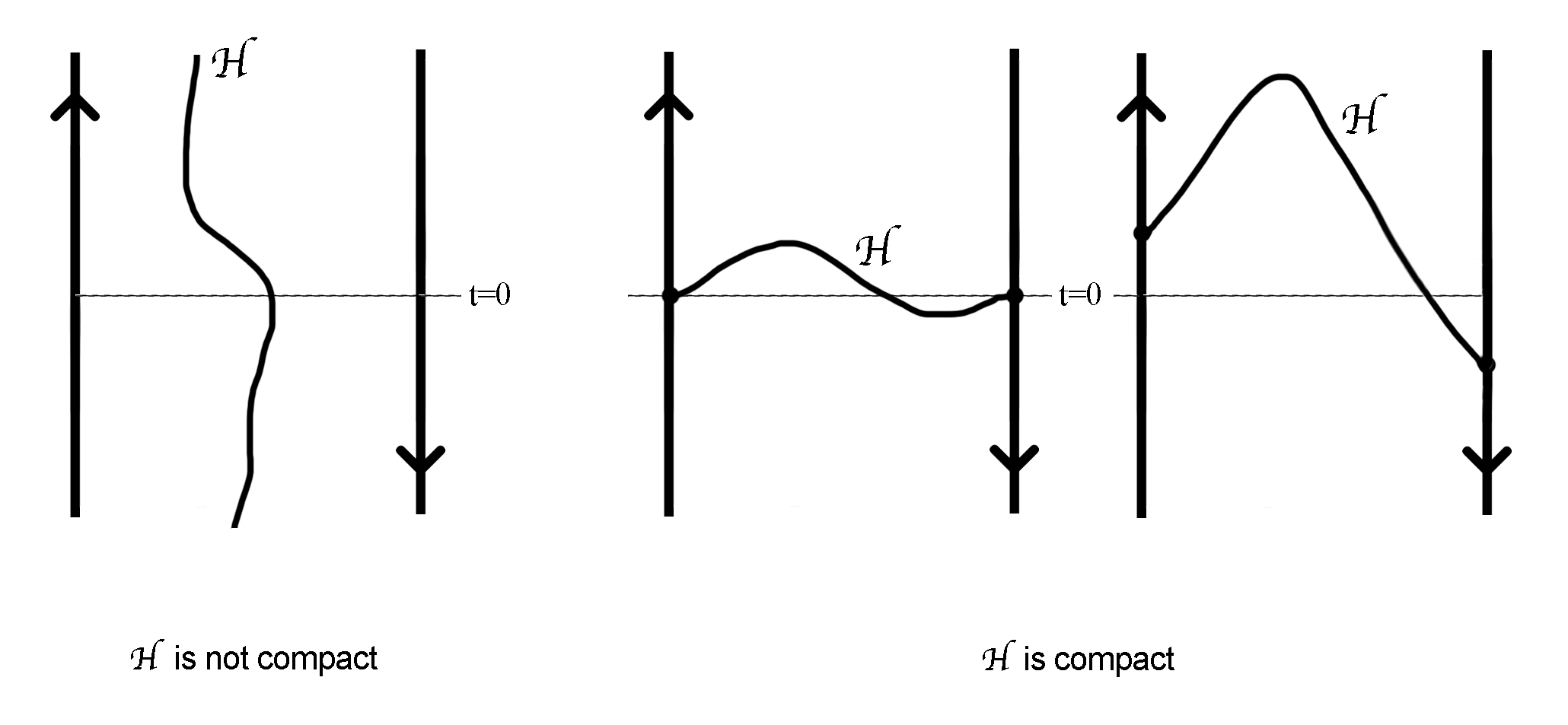}\caption{{\small{}\label{fig:hypersurface-alignment}The hypersurface of signature
change can be either compact or non-compact: The very left example
depicts a hypersurface that is not compact. The two right examples
show a hypersurface that is compact. For the latter two, it should
be noted that the inclinations of the tangents at $\mathcal{H}$ must
also align with the identification.}}
\end{figure}

\begin{rem}
Since only the $x$-direction of $M$ is compactified,
while the $t$-direction remains unbounded, the hypersurface $\mathcal{H}$
may be compact or non-compact depending on its orientation relative
to the identification (see Figure~\ref{fig:hypersurface-alignment}).
In particular, the two right illustrations show compact hypersurfaces,
while the left illustrates a non-compact one. For compact cases, the
tangents of $\mathcal{H}$ must align with the identification.
\end{rem}

The above reasoning provides an alternative proof of Proposition 6
 in~\cite{Hasse + Rieger-Loops}. For ease of reference, we restate Proposition~\ref{prop:orientable+timeorientable-not_orientable}:
\begin{prop*}
Even if a transverse, signature-type changing manifold $(M,g)$ with
a transverse radical is pseudo-time orientable and pseudo-space orientable,
it is not necessarily orientable.
\end{prop*}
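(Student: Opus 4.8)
The plan is to prove this existence statement by exhibiting a single witness, namely the signature-changing infinite M\"{o}bius strip $\tilde M_\infty=(M/\!\sim,\tilde g)$ constructed in Subsection~\ref{subsec:Open-Infinite-Moebius}. First I would fix the transformation function $f\in C^\infty(M)$ to be invariant under the identification $(t,x)\sim((-1)^k t,x+k)$, so that $\tilde g=g+fV^\flat\otimes V^\flat$ descends to the quotient, and I would choose $f$ exactly as in the preceding Remark, arranging $\mathcal H=f^{-1}(1)$ to be nonempty with $df\neq0$ and $V(f)\neq0$ along $\mathcal H$. By Condition~\ref{transverse radical} together with \eqref{eq: condition1}--\eqref{eq: condition2}, this guarantees that $\tilde M_\infty$ is a transverse signature-type changing manifold whose radical is transverse to $\mathcal H$, so it lies in the class to which the proposition refers.

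Next I would verify the three orientability properties one at a time. Non-orientability is purely topological: the underlying space is an open M\"{o}bius strip, hence carries an orientation-reversing loop (the image of a $t$-line under $(t,x)\mapsto(-t,x+1)$) and admits no global nowhere-vanishing $2$-form, so $\tilde M_\infty$ is not orientable. For pseudo-time orientability I would invoke Definition~\ref{def:Pseudo-time-orientable}: since $M_\infty$ is time-orientable, the timelike line field $\{V,-V\}$ with $V=F_0$ induces a time orientation of the Lorentzian region $M_L$, where indeed $\tilde g(V,V)=g(V,V)+f\,(V^\flat(V))^2=f-1<0$. For pseudo-space orientability (Definition~\ref{def:Pseudo-space-orientable}) I would note that $n=2$, so only a single nowhere-vanishing spacelike field on $M_L$ is required; the $g$-orthogonal companion $F_1$ of $V$ supplies it, and one checks $\tilde g(F_1,F_1)=1$ and $\tilde g(F_0,F_1)=0$, so $\{F_0,F_1\}$ is a global $\tilde g$-orthogonal frame adapted to the signature split.

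The conceptual crux --- and the step I expect to be the real obstacle --- is to see that there is no contradiction between non-orientability and the simultaneous existence of a timelike field and a spacelike frame. The resolution is that Definitions~\ref{def:Pseudo-time-orientable} and~\ref{def:Pseudo-space-orientable} are conditions on the Lorentzian region $M_L$ \emph{alone}, so one must arrange the geometry so that $M_L$ is the orientable part of the strip while the orientation-reversal of the M\"{o}bius identification is carried by the Riemannian region $M_R$ (equivalently, the orientation-reversing core loop lies in $M_R$). Concretely, choosing $f$ even in $t$ and periodic in $x$ so that $M_R$ is a neighborhood of the core circle and $M_L=\{|t|>t_0\}$, the flip $t\mapsto -t$ glues the two outer Lorentzian strips into a single orientable cylinder, on which a global timelike vector field and a global spacelike field coexist (so $M_L$ is even parallelizable) while the ambient strip stays non-orientable. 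Verifying this descent compatibility --- that the chosen $f$, the time orientation, and the spacelike field all survive the quotient while no global area form does --- is the delicate point; everything else is bookkeeping already assembled in this subsection.
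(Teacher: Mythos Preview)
Your proposal uses the same witness (the signature-changing infinite M\"obius strip $\tilde M_\infty$) and the same verification strategy as the paper, but you are more careful on the key structural point. The paper's proof extends $V$ to an orthonormal frame $\{V,E_1\}$ ``at each $p\in M$'' and then treats $E_1$ as a global frame field; taken literally this would make the M\"obius strip parallelizable and hence orientable, contradicting the very conclusion being proved. You correctly observe that Definitions~\ref{def:Pseudo-time-orientable} and~\ref{def:Pseudo-space-orientable} constrain only the Lorentzian region $M_L$, and that one must therefore choose $f$ so that the orientation-reversing core circle lies in $M_R$, leaving $M_L$ an orientable cylinder on which a timelike--spacelike frame genuinely exists. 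That refinement is exactly what makes the argument complete; the paper's short proof never specifies $f$ or the shape of $M_L$ and so does not address it.

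One caution about your concrete choice: if $\mathcal H=\{|t|=t_0\}$ then along $\mathcal H$ one has $\partial_x f=0$, so $V(f)=\cos(\pi x)\,\partial_t f$ vanishes at $x\in\tfrac12+\mathbb Z$, violating the transversality condition~\eqref{eq: condition2}. Thus $\mathcal H$ cannot be a union of constant-$t$ lines; you will need to tilt it (in the spirit of the Remark you invoke) while preserving the invariance $f(-t,x+1)=f(t,x)$ and the property that $M_R$ still contains the core. This is doable but not automatic, and deserves an explicit line of justification.
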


\begin{proof}
Consider the $2$-dimensional M\"{o}bius
strip $M_{\infty}:=(M/\sim,g)$ with the standard topology and the
metric $g=-\cos(2\varphi)(dt)^{2}+2\sin(2\varphi)dtdx+\cos(2\varphi)(dx)^{2}$,
$\varphi=\pi x$ (Subsection~\ref{subsec:Open-Infinite-Moebius}).
Since $M_{\infty}$ is time-orientable, it admits a global timelike
vector field 
\[
V=(\cos\varphi)\frac{\partial}{\partial t}-(\sin\varphi)\frac{\partial}{\partial x},\qquad g(V,V)=-1.
\]
We extend this $V$ to an orthonormal basis $\{V,E_{1}\}$ of $T_{p}M$
at each $p\in M$, with $g(E_{1},E_{1})=1$ and $g(V,E_{1})=0$, with
$E_{1}$ being a frame field.

\textcolor{black}{~\\ Applying the Transformation Prescription yields
the signature-changing }M\"{o}bius\textcolor{black}{{} strip $\tilde{M}_{\infty}=(M/\sim,\tilde{g})$,
where $\tilde{g}$ is as above in Equations~\ref{eq:signature-changing Moebius strip}
and~\ref{eq:signature-changing Moebius metric}. ~\\ The manifold
$\tilde{M}_{\infty}$ is pseudo-time orientable, with $V=(\cos\varphi)\frac{\partial}{\partial t}-(\sin\varphi)\frac{\partial}{\partial x}$
being a continuous, non-vanishing, timelike vector field $V$ on $M_{L}$.
And also with $\{E_{1}\}$ spacelike, $\tilde{M}_{\infty}$ is pseudo-space
orientable, yet the }M\"{o}bius\textcolor{black}{{} strip $\tilde{M}_{\infty}$
is not orientable.} \end{proof}

These constructions demonstrate how the Transformation Prescription
extends naturally from orientable Lorentzian manifolds to non-orientable
settings such as the\textcolor{black}{{} }M\"{o}bius strip. In particular,
while orientability is lost under the quotient, pseudo-time and pseudo-space
orientability persist, illustrating that the local structure governed
by the Transformation Theorem remains intact even in globally non-orientable
manifolds. This positions the\textcolor{black}{{} }M\"{o}bius strip
as a canonical model for exploring the interplay between local signature
change and global topological obstructions.

\subsection{Compact models and topological obstructions}

It is well known that every non-compact smooth manifold admits a Lorentzian
metric. However, this is not generally the case for compact manifolds.
Moreover, any compact time-oriented Lorentzian manifold necessarily
possesses a non-empty chronology-violating set. In particular, such
manifolds contain closed timelike curves, which are typically regarded
as pathological.

~

In this subsection we examine two compact $2$-manifolds: the compact
M\"{o}bius strip, which admits a Lorentzian metric, and the real
projective plane, which does not. Throughout, all manifolds are assumed
to be smooth and may have non-empty boundary. By Brown\textquoteright s
collaring theorem~\cite{Brown - Collar Theorem,Lee - Introduction to Smooth Manifolds},
the boundary $\mathcal{H}$ admits a collar neighborhood.
We further assume that there exists a diffeomorphism between the relevant
collar neighborhoods.

\subsubsection{Compact M\"{o}bius strip with boundary\label{subsec:Compact Moebius-Strip}}

We now consider the quotient manifold with boundary 

\begin{equation}
M=([0,1]\times[0,1])/\sim,\label{eq:compact Moebius}
\end{equation} 
where the equivalence relation identifies opposite sides of the unit
square via

\[
(t,0)\sim(1-t,1).
\]
 This construction is illustrated in Figure~\ref{fig:manifold with boundary}.

\textbf{~}\\ Topologically, this quotient is the compact M\"{o}bius strip. It is equipped with the rotating
metric

\[
g=-\cos(2\varphi)(dt)^{2}+2\sin(2\varphi)dtdx+\cos(2\varphi)(dx)^{2},\quad\varphi=\pi x,
\]
as induced from the metric on $M_{\infty}$, where the quotient map
identifies $(t,0)$ with $(1-t,1)$, see Subsection~\ref{subsec:Rotating-Minkowski-Metric}.

\textbf{~}\\ The resulting M\"{o}bius strip 
\[
(M,g)=(([0,1]\times[0,1])/\sim,g)
\]
is compact, not simply connected, non-orientable, and time-orientable.
Thus it has the topology of the M\"{o}bius strip.\footnote{By contrast, under the identification $(t,x)\sim(-t,x+3)$, one obtains
a non-time-orientable M\"{o}bius strip.} 

\begin{figure}[H]
\centering{}\includegraphics[scale=1.11]{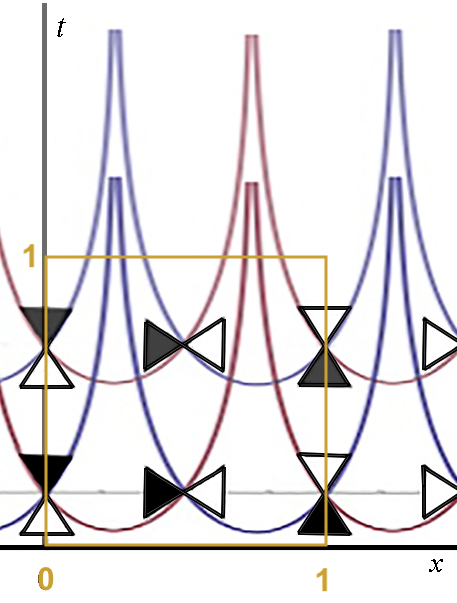}\caption{{\small{}\label{fig:manifold with boundary}The yellow unit square represents the quotient manifold with boundary $M=([0,1]\times[0,1])/\sim$,
obtained by identifying the opposite sides via $(t,0)\sim(1-t,1)$.}}
\end{figure}
\textbf{~}\\As a time-orientable Lorentzian manifold, the compact
M\"{o}bius strip $M$ admits a global, non-vanishing vector field
\[
V=(\cos\varphi)\frac{\partial}{\partial t}-(\sin\varphi)\frac{\partial}{\partial x}.
\]
Therefore, we may apply the Transformation Prescription~(\ref{eq:Transformation Prescription})
to obtain the signature-type changing M\"{o}bius strip 
\[
(\tilde{M},\tilde{g})=(([0,1]\times[0,1])/\sim,\tilde{g}),
\]
where 
\[
\tilde{g}=(f\cdot\cos^{2}(\varphi)-\cos(2\varphi))(dt)^{2}+(2-f)\sin(2\varphi)dtdx+(f\cdot\sin^{2}(\varphi)+\cos(2\varphi))(dx)^{2},
\]
and $f$ is an arbitrary $C^{\infty}$ function determining the hypersurface
of signature change $\mathfrak{\mathcal{H}=}f^{-1}(1)$.~\\{}

\subsubsection{The crosscap manifold}

Consider again the compact M\"{o}bius strip 
\[
M=([0,1]\times[0,1])/((t,0)\sim(1-t,1))
\]
with nonempty boundary $\partial M$. Note that 
\[
M_{\frac{1}{2}}=(\{\frac{1}{2}\}\times[0,1])/((\frac{1}{2},0)\sim(\frac{1}{2},1))\subset M
\]
represents the center line of the M\"{o}bius strip. The boundary,
which is a $1$-dimensional manifold, is given by 
\[
\partial M=\{(0,x)\colon x\in[0,1]\}\cup\{(1,x)\colon x\in[0,1]\},
\]
with identifications $(0,1)\sim(1,0)$ and $(0,0)\sim(1,1)$. Hence,
$\partial M\cong S^{1}$.

\textbf{~}\\ We can therefore define a map 
\[
\phi\colon[0,\frac{1}{2}]\times S^{1}\longrightarrow M,
\]
such that $\phi(0,\tilde{x})\in\partial M$ and $\phi(\frac{1}{2},\tilde{x})\in M_{\frac{1}{2}}$.
For each $(t,\tilde{x})\in[0,\frac{1}{2}]\times S^{1}$, the point
$\phi(t,\tilde{x})\in M$ is obtained by an orthogonal displacement
of length $t$ into the M\"{o}bius strip, starting at $\tilde{x}\in\partial M$.\footnote{The term ``length'' here does not refer to any metric on 
$M$, but only to differences in the $t$-coordinate.}
The map $\phi$ is continuous and defined over the entire M\"{o}bius
strip, making it surjective. Since antipodal points, $\pm\tilde{x}\in S^{1}$,
on the boundary satisfy 
\[
\phi(\frac{1}{2},\tilde{x})=\phi(\frac{1}{2},-\tilde{x}),
\]
 we obtain the quotient 
\[
C\coloneqq([0,\frac{1}{2}]\times S^{1})/\sim,\qquad(\frac{1}{2},\tilde{x})\sim(\frac{1}{2},-\tilde{x}),
\]
which is the crosscap. Note that we can now define the boundary as
$\partial M\coloneqq\phi(\{0\}\times S^{1})$. Thus, after identifying
antipodal points, we obtain a continuous bijection 
\[
\tilde{\phi}\colon C\longrightarrow M.
\]
\textcolor{black}{Since $C$ is compact and $M$ is Hausdorff, $\tilde{\phi}$
is a homeomorphism.}

\begin{figure}[H]
\centering{}\includegraphics[scale=0.37]{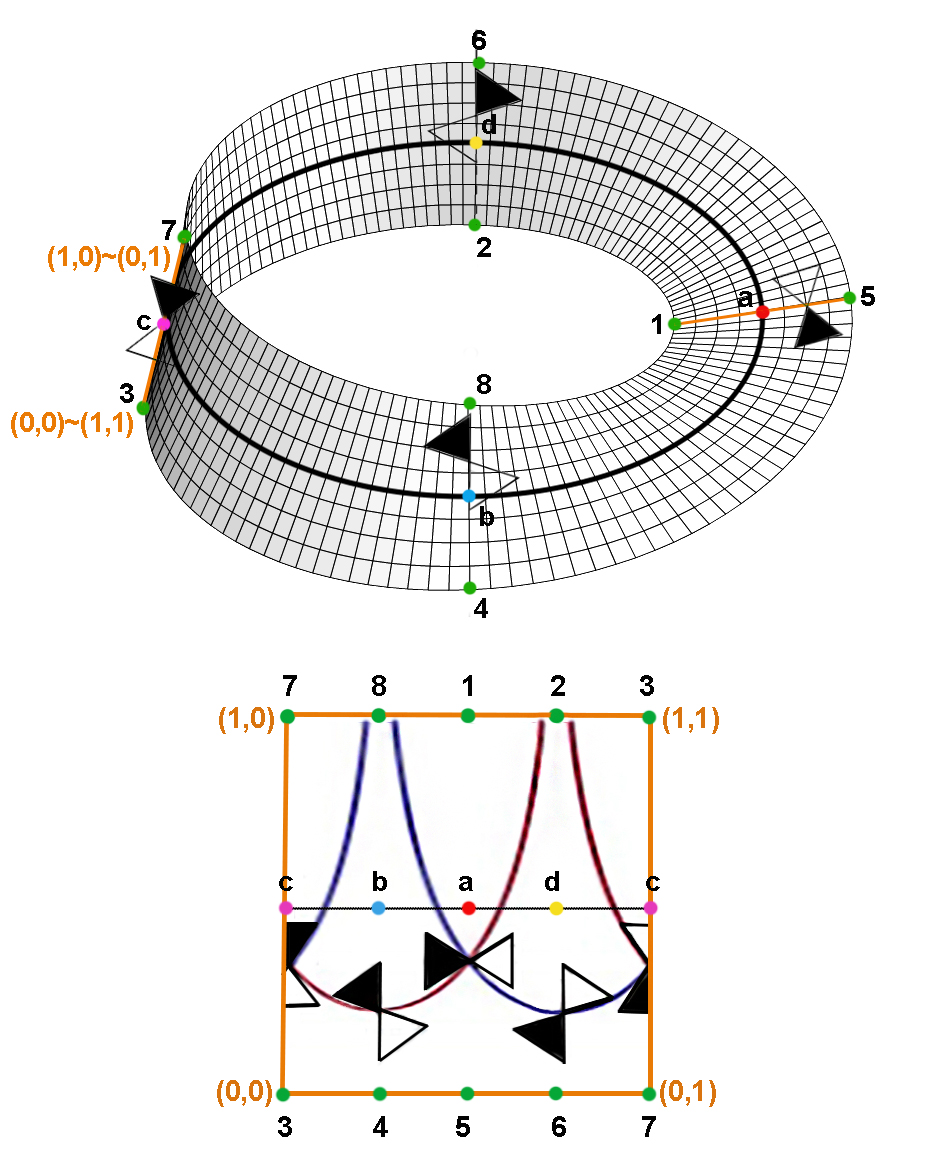}\caption{{\small{}\label{fig:Light-cones on Moebius}The light cone structure
in the M\"{o}bius strip.}}
\end{figure}

\textbf{~}\\ The crosscap is closed. However, a compact manifold
admits a Lorentzian metric only if its Euler characteristic is zero.\footnote{In this context, a closed manifold is a manifold without boundary
that is compact.} Unlike the M\"{o}bius strip, the crosscap does not admit
a globally defined Lorentzian metric. Therefore, the Transformation Prescription~(\ref{eq:Transformation Prescription}) cannot be applied to produce
a signature-changing metric directly from the crosscap. Nevertheless,
since the crosscap can be viewed as sewing a M\"{o}bius strip to
the boundary of a disk, we may instead refer to the compact M\"{o}bius
strip $M$ (Subsection~\ref{subsec:Compact Moebius-Strip}) and
use its Lorentzian metric $g$ for the Lorentzian sector of the crosscap,
see Figure~\ref{fig:Light-cones on Moebius}. The locus of signature
change is then naturally placed at $\partial M$, while the closed
unit disk $\mathbb{\overline{D}}^{2}$ constitutes the Riemannian
sector. By Brown\textquoteright s collaring theorem~\cite{Brown - Collar Theorem,Lee - Introduction to Smooth Manifolds},
both the M\"{o}bius strip and the disk have collar neighborhoods
along their common boundary.

\textbf{~}\\ To elucidate the partition of the crosscap into those
three areas, we formally describe the crosscap as the \textit{adjunction
space}~\cite{Lee - Introduction to Smooth Manifolds} 

\begin{equation}
C:=\overline{\mathbb{D}}^{2}\cup_{\psi}\mathbb{M}\label{eq: adjunction space}
\end{equation} 

\begin{equation}
\overline{\mathbb{D}}^{2}
= \{\, (\theta,r) : \theta \in S^{1},\ r \in [0,1] \,\},
\qquad
\partial\overline{\mathbb{D}}^{2}
= \{\, (\theta,1) : \theta \in S^{1} \,\}, 
\end{equation} 
with the attaching map
\[
\psi(\theta,1)=\begin{cases}
\begin{array}{c}
(1,\frac{\theta}{\pi}),\\
(0,\frac{\theta}{\pi}-1),
\end{array} & \begin{array}{c}
0\leq\theta\leq\pi,\\
\pi\leq\theta\leq2\pi.
\end{array}\end{cases}
\]


\textbf{~}\\ The map $\psi\colon\partial\overline{\mathbb{D}}^{2}\longrightarrow\partial M$
is continuous but not injective, and goes from the boundary circle
of the disk to the boundary of the M\"{o}bius strip. The locus of
signature change is then 
\[
\mathcal{H}\coloneqq\partial M=\{(0,x)\colon x\in[0,1]\}\cup\{(1,x)\colon x\in[0,1]\},
\]
with identifications $(0,1)\sim(1,0)$ and $(0,0)\sim(1,1)$. 

\textbf{~}\\ 
Throughout this section, we work under Condition~\ref{transverse radical}, which requires the radical to be transverse to the hypersurface of signature change. In particular, any admissible transformation function $f$ must produce a metric whose radical is nowhere tangent to $\mathcal H$. As we shall see, this requirement already leads to an obstruction on the crosscap, independently of the specific choice of $f$.

\textbf{~}\\ If we require the signature change to occur at $\partial M \cong S^{1}$, we may set
\[
f(t,x)=t^{2}+x^{2}
\]
as a simple, rotationally symmetric $C^{\infty}$ function satisfying $f^{-1}(1)=\partial M$, which induces a signature
change at the boundary $\partial M$ of $\tilde{M}_{c}=(([0,1]\times[0,1])/\sim,\tilde{g})$. Consequently,

\[
\mathcal{H}=\partial M=f^{-1}(1)=\{(t,x)\colon t^{2}+x^{2}=1\}=S^{1}.
\]
Inserting this into the transformed metric 

\[
\tilde{g}=(f\cdot\cos^{2}(\varphi)-\cos(2\varphi))(dt)^{2}+(2-f)\sin(2\varphi)dtdx+(f\cdot\sin^{2}(\varphi)+\cos(2\varphi))(dx)^{2},
\]
yields the associated matrix representation

\[
[\tilde{g}]=\left(\begin{array}{cc}
(t^{2}+x^{2})\cdot\cos^{2}(\pi x)-\cos(2\pi x) & -(t^{2}+x^{2}-2)\sin(\pi x)\cos(\pi x)\\
-(t^{2}+x^{2}-2)\sin(\pi x)\cos(\pi x) & (t^{2}+x^{2})\cdot\sin^{2}(\pi x)+\cos(2\pi x)
\end{array}\right),
\]
and we find that its determinant is 
\[
\det([\tilde{g}])=t^{2}+x^{2}-1.
\]

\textbf{~}\\ Thus $\tilde{g}$ is Lorentzian for $t^{2}+x^{2}=f(t,x)<1$
and Riemannian for $t^{2}+x^{2}=f(t,x)>1$. This metric $\tilde{g}$
reaches its canonical form $g=-1(dt)^{2}+1(dx)^{2}$ only for $x=t=0$.
Moreover, $d(\det([\tilde{g}]))=2tdt+2xdx=df\neq0$ for any $(t,x)\in\mathcal{H}$.

~\\ The causal structure can be elucidated by means of

~\\ $0=((t^{2}+x^{2})\cdot\cos^{2}(\pi x)-\cos(2\pi x))(\frac{dt}{dx})^{2}-2(t^{2}+x^{2}-2)\sin(\pi x)\cos(\pi x)\frac{dt}{dx}+((t^{2}+x^{2})\cdot\sin^{2}(\pi x)+\cos(2\pi x))$

~\\ $\Longleftrightarrow\frac{dt}{dx}=\frac{(t^{2}+x^{2}-2)\sin(\pi x)\cos(\pi x)\pm\sqrt{-t^{2}-x^{2}+1}}{((t^{2}+x^{2})\cdot\cos^{2}(\pi x)-\cos(2\pi x))}$.

~\\ The causal structure, along with the determinant $\det([\tilde{g}])=t^{2}+x^{2}-1<0\Longleftrightarrow t^{2}+x^{2}<1$,
indicates that the Lorentzian portion of the metric $\tilde{g}$ is
situated within the closed disk $\mathbb{\overline{D}}^{2}$. 

\textbf{~}\\ This situation already violates Condition~\ref{transverse radical}. Indeed, a direct computation shows that the radical of $\tilde g$ is tangent to $\mathcal H$ at isolated points. Consequently, the metric $\tilde g$ fails to be transverse at the locus of signature change and therefore does not belong to the class of transverse type-changing metrics considered in this work. The contradiction with the Poincar\'e--Hopf theorem provides an additional global obstruction, but it is not essential for excluding this example.


\textbf{~}\\ Even if we instead consider the signature transformation
\[
\tilde{g}=-(dt)^{2}+(dx)^{2}+f\left(g(\frac{\partial}{\partial t},\centerdot)\otimes g(\frac{\partial}{\partial t},\centerdot)\right)=(f-1)(dt)^{2}+(dx)^{2},
\]
with the global, non-vanishing, timelike vector field $V=\frac{\partial}{\partial t}$
with respect to $g=-(dt)^{2}+(dx)^{2}$, then with $f(t,x)=t^{2}+x^{2}$,
this yields

\[
\tilde{g}=(f-1)(dt)^{2}+(dx)^{2}=(t^{2}+x^{2}-1)(dt)^{2}+(dx)^{2}.
\]
The causal structure, along with the determinant, yields $\det([\tilde{g}])=t^{2}+x^{2}-1<0\Longleftrightarrow t^{2}+x^{2}<1$,
revealing that the Lorentzian portion of the metric $\tilde{g}$ is
located within the closed disk $\mathbb{\overline{D}}^{2}$. This
is again a contradiction!

\textbf{~}\\ We emphasize that replacing $f(t,x)$ by a monotone function of $t^{2}+x^{2}$ that interchanges the Lorentzian and Riemannian sectors does not remove this obstruction: the vector field $V$ remains unchanged, $V(f)$ only changes sign, and the radical remains tangent at the same points of $\mathcal H$ (i.e., the radical of $\tilde{g}$ is of mixed character with respect to $\mathcal{H}$). Consequently, $\tilde{g}$ does not
fall into the class of \textit{transverse type-changing metrics with
a transverse radical}, as defined in~\cite{Hasse + Rieger-Transformation, Hasse + Rieger-Local Transformation, Hasse + Rieger-Loops, Kossowksi + Kriele - Signature type change and absolute time in general relativity, Kossowski - The Volume BlowUp and Characteristic Classes for Transverse}. In particular, there is no smooth function
$f$ that produces a purely transverse signature change at $\mathcal{H}=\partial M$.
Hence the Transformation Theorem cannot be applied even locally to
the crosscap.

~

\begin{rem*}
If Condition~\ref{transverse radical} is dropped, one may allow the radical to be tangent to $\mathcal H$. In this broader setting, the above construction yields a well-defined signature-changing metric on the crosscap. However, the radical necessarily becomes tangent at isolated points of $\mathcal H$, and pseudo-time orientation fails to extend globally. Thus, even without imposing transversality, the crosscap exhibits intrinsic obstructions to a smooth causal structure compatible with a transformation prescription.
\end{rem*}

\subsubsection{The real projective plane ($\mathbb{R}P^{2}$) construction\label{subsec:Cross-Cap-metric}}

We now turn to the construction of a signature-type changing metric
on the real projective plane $\mathbb{R}P^{2}$. Recall that $M$ (see Equation~\ref{eq:compact Moebius})
is compact and $\partial M$ is a closed (and therefore compact) subspace
of $M$. The M\"{o}bius strip models considered here do not fall
into the framework of~\cite{Hasse + Rieger-Transformation, Hasse + Rieger-Local Transformation, Hasse + Rieger-Loops, Kossowksi + Kriele - Signature type change and absolute time in general relativity, Kossowski - The Volume BlowUp and Characteristic Classes for Transverse}, since the associated radical
distribution is partly transverse and partly tangent to the degeneracy
hypersurface. Consider the M\"{o}bius strip with boundary

\[
\overline{\mathbb{M}}=([-\sqrt{2},\sqrt{2}]\times[-\sqrt{2},\sqrt{2}])/\thicksim,
\]
where the identification is given by

\[
(t,-\sqrt{2})\sim(-t,\sqrt{2}).
\]
Its boundary is 
\[
\partial\mathbb{M}=\{(-\sqrt{2},x)\colon x\in[-\sqrt{2},\sqrt{2}]\}\cup\{(\sqrt{2},x)\colon x\in[-\sqrt{2},\sqrt{2}]\}
\]
with the additional identifications 
\[
(\sqrt{2},-\sqrt{2})\sim(-\sqrt{2},\sqrt{2}),\qquad(-\sqrt{2},-\sqrt{2})\sim(\sqrt{2},\sqrt{2}).
\]
Inside $\overline{\mathbb{M}}$, we distinguish the center line

\[
\mathbb{M}_{0}=(\{0\}\times[-\sqrt{2},\sqrt{2}])/\thicksim,\qquad(0,-\sqrt{2})\sim(0,\sqrt{2}),
\]
which is a closed curve in $\overline{\mathbb{M}}$.\textbf{~}\\ The
quotient manifold is equipped with the subspace topology of $\mathbb{R}^{2}$
and local Cartesian coordinates $(t,x)$. It is a standard fact that
the punctured projective plane is topologically equivalent to the
open M\"{o}bius strip:
\[
\mathbb{M}\cong\mathbb{R}P^{2}\setminus\{(t,x)\in\mathbb{R}^{2}\colon t^{2}+x^{2}\leq1\}=\mathbb{R}P^{2}\setminus\mathbb{\overline{D}}^{2}.
\]

\begin{rem}
Recall that the real projective plane $\mathbb{R}P^{2}$ can be obtained from the
unit sphere $S^{2} \subset \mathbb{R}^{3}$ by identifying antipodal points; that
is,
\[
\mathbb{R}P^{2} \cong S^{2}/\!\sim,
\]
where the equivalence relation is $p \sim -p$.  
The \emph{antipodal map}
\[
A \colon S^{2} \to S^{2}, \qquad A(p) = -p,
\]
is an isometry of $S^{2}$. Moreover, the canonical projection
\[
\pi \colon S^{2} \to \mathbb{R}P^{2}
\]
is the corresponding quotient map, identifying each point with its antipode.
Note that the tangent spaces $T_{p}S^{2}$ and $T_{A(p)}S^{2} = T_{-p}S^{2}$ are
canonically identified under this projection. Since $S^{2}$ cannot admit a globally defined Lorentzian metric, no Lorentzian
metric exists on $\mathbb{R}P^{2}$ either. Indeed, if a Lorentzian metric $g$
existed on $\mathbb{R}P^{2}$, then its pullback $\pi^{*}g$ would endow $S^{2}$
with a Lorentzian metric, which is impossible. However, using the quotient map $\pi$, one can equip $\mathbb{R}P^{2}$ with a
Riemannian metric by declaring
\[
\langle (d\pi)_{p}(v), (d\pi)_{p}(w) \rangle_{\pi(p)}
= \langle v, w \rangle_{p},
\qquad p \in S^{2},\ v,w \in T_{p}S^{2},
\]
which makes $\pi$ a local isometry.  
Alternatively, one may consider a more general Riemannian metric of the form
\[
ds^{2} = g(p)\, d\rho^{2},
\]
where $d\rho^{2}\!\mid_{p} = \langle \cdot, \cdot \rangle_{p}$ is the line element
of the unit sphere inherited from its embedding in Euclidean space, and $g(p) > 0$ for all
$p \in \mathbb{R}P^{2}$.
\end{rem}
 
For the reader's convenience, we reproduce Proposition~\ref{prop:crosscap-proposition} here.

\begin{prop*}[Radical and signature change on the crosscap]
Let $(C,g)$ be the 2-dimensional manifold (as defined in Equation~\ref{eq: adjunction space}) with metric  \[ g = (1-t^{2})\,dt^{2} + 2tx\,dt\,dx + (1-x^{2})\,dx^{2}, \] and define the degeneracy locus \[ \mathcal{H} = \{ (t,x) \in C : t^{2}+x^{2}=1 \}. \]
Then: \begin{enumerate} \item $(C,g)$ is a \emph{signature-changing manifold}, with Lorentzian signature in the region $t^{2}+x^{2} > 1$ and Riemannian signature in $t^{2}+x^{2} < 1$. \item The \emph{radical} at each point $q \in \mathcal{H}$ is 1-dimensional: \[ \mathrm{Rad}_{q} = \mathrm{span} \Bigl\{ (1, \sqrt{1-t^{2}}/t)^{T} \Bigr\} = \mathrm{span} \Bigl\{ (\sqrt{1-x^{2}}/x,1)^{T} \Bigr\}, \] where $T$ denotes the transpose of the matrix with respect to the chosen coordinates. \item The radical is of mixed character with respect to $\mathcal{H}$ ($\mathrm{Rad}_{q}$ is tangent iff $t^2 = x^2 = \frac{1}{2}$), with the radical being tangent at a measure-zero set. \end{enumerate}
Consequently, $(C,g)$ provides a compact example of a signature-changing manifold that \emph{cannot} be obtained from a Lorentzian manifold via the Transformation Prescription. 
\end{prop*}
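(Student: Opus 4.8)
The plan is to establish the three numbered assertions by direct computation in the local $(t,x)$-coordinates and then to read off the concluding impossibility statement from the character of the radical.

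\textbf{Signature change.} First I would compute the determinant of the coefficient matrix
\[
[g_{\mu\nu}]=\begin{pmatrix} 1-t^{2} & tx \\ tx & 1-x^{2}\end{pmatrix},
\qquad
\det[g_{\mu\nu}]=(1-t^{2})(1-x^{2})-t^{2}x^{2}=1-t^{2}-x^{2}.
\]
This vanishes exactly on $\mathcal{H}=\{t^{2}+x^{2}=1\}$ and keeps one fixed sign on each of the two components of $C\setminus\mathcal{H}$, so the bilinear type is definite on one side and indefinite on the other; evaluating a diagonal entry (equivalently, the eigenvalues) at a single reference point of each region then fixes which component carries the Lorentzian and which the Riemannian type. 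I would also record that $d(\det[g_{\mu\nu}])=-2t\,dt-2x\,dx$ is nowhere zero on $\mathcal{H}$, so $\mathcal{H}$ is a smoothly embedded hypersurface and the change of type is codimension one in the sense required for a transverse type-changing singular semi-Riemannian manifold.

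\textbf{The radical.} On $\mathcal{H}$ I substitute $1-t^{2}=x^{2}$ and $1-x^{2}=t^{2}$, which collapses the matrix to a rank-one outer product,
\[
[g_{\mu\nu}]\big|_{\mathcal{H}}
=\begin{pmatrix} x^{2} & tx \\ tx & t^{2}\end{pmatrix}
=\begin{pmatrix} x \\ t\end{pmatrix}\begin{pmatrix} x & t\end{pmatrix}.
\]
Its kernel is one-dimensional and consists precisely of the vectors Euclidean-orthogonal to $(x,t)$; choosing a representative such as $(t,-x)^{T}$ and re-expressing $x$ and $t$ through $\sqrt{1-t^{2}}$ and $\sqrt{1-x^{2}}$ on $\mathcal{H}$ recovers the two stated spanning vectors (valid on the charts $t\neq0$ and $x\neq0$ respectively).

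\textbf{Mixed character and the consequence.} The tangent line $T_{q}\mathcal{H}$ equals $\ker(df)_{q}$ for $f=t^{2}+x^{2}$, hence is spanned by the vector orthogonal to the gradient direction $(t,x)$, namely $(-x,t)^{T}$. I then test parallelism of the radical direction $(t,-x)^{T}$ with $(-x,t)^{T}$; the proportionality condition reduces to $t^{2}=x^{2}$, which together with $t^{2}+x^{2}=1$ forces $t^{2}=x^{2}=\tfrac12$. This is a set of four isolated points, of measure zero in $\mathcal{H}$, so the radical is transverse on a full-measure open subset and tangent only there, i.e.\ of mixed character. For the final clause I invoke that the Transformation Prescription~\eqref{eq:Transformation Prescription} always imposes $V(f)\neq0$ on $\mathcal{H}$, which is precisely the transversality of the radical demanded by Condition~\ref{transverse radical}; since $(C,g)$ violates transversality at those four points, it cannot arise as the output of the Prescription. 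I would reinforce this with the global obstruction that the crosscap is closed with $\chi=1\neq0$, so it carries no global Lorentzian metric capable of serving as the Prescription's Lorentzian input in the first place.

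\textbf{Main obstacle.} The delicate step is the mixed-character analysis: one must keep rigorously separate the two notions of orthogonality in play---the Euclidean orthogonality that defines $T_{q}\mathcal{H}=\ker df$ via $\nabla f$, and the $g$-degenerate direction that defines the radical---and one must treat the coordinate loci $t=0$ and $x=0$, where the displayed spanning vectors become singular, by switching to the complementary representative. A secondary point requiring care is verifying that all these pointwise statements are invariant under the crosscap identifications, so that ``signature,'' ``radical,'' and ``tangency locus'' are well-defined on the quotient $C$ rather than only on the fundamental square.
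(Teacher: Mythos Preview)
Your proposal is correct and follows essentially the same route as the paper: compute $\det[g_{\mu\nu}]=1-t^{2}-x^{2}$ to locate $\mathcal{H}$ and check $d(\det)\neq0$ there, determine the radical on $\mathcal{H}$ as the solutions of $xw_{1}+tw_{2}=0$, compare this with $T_{q}\mathcal{H}=\ker(t\,dt+x\,dx)$, and read off tangency precisely when $t^{2}=x^{2}=\tfrac12$. Your rank-one factorization $[g_{\mu\nu}]\big|_{\mathcal{H}}=(x,t)^{T}(x,t)$ is slightly slicker than the paper's derivation via the quadratic form $g(w,w)=(xw_{1}+tw_{2})^{2}$, but the two arguments are otherwise identical.
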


\begin{proof}
On the complement $\mathbb{R}P^{2}\setminus\mathbb{\overline{D}}^{2}$,
we now introduce a Lorentzian-type metric that undergoes a transition
of signature along a compact hypersurface. With respect to the canonical coordinates $(t,x)\in((-\sqrt{2},\sqrt{2})\times[-\sqrt{2},\sqrt{2}])\setminus\mathbb{\overline{D}}^{2}$,
we define the Lorentzian $2$-manifold by the smooth metric

\[
g=(1-t^{2})(dt)^{2}+2txdtdx+(1-x^{2})(dx)^{2},
\]
where the quotient map identifies 
\[
(t,-\sqrt{2})\sim(-t,\sqrt{2}),\qquad(-\sqrt{2},x)\sim(\sqrt{2},-x).
\]

\textbf{~}\\ The determinant of the metric is 
\[
\det([g_{\mu\nu}])=(1-x^{2})(1-t^{2})-(tx)^{2}=1-t^{2}-x^{2}.
\]
Hence, $\det([g_{\mu\nu}])<0\Longleftrightarrow t^{2}+x^{2}>1$, which is non-vanishing
for $(t,x)\in((-\sqrt{2},\sqrt{2})\times[-\sqrt{2},\sqrt{2}])\setminus\mathbb{\overline{D}}^{2}$.
Therefore, $(\mathbb{R}P^{2}\setminus\mathbb{\overline{D}}^{2},g)$
is Lorentzian. Notice that the metric reduces to the
canonical form $g=-(dt)^{2}+(dx)^{2}$ when $x=0$ and $t=\pm\sqrt{2}$.\footnote{Choosing $x=\pm\sqrt{2}$ and $t=0$ has the effect of interchanging
the role of space and of time, and we get $g=(dt)^{2}-(dx)^{2}$.}

\textbf{~}\\ \textit{Relation with the crosscap:} The topological space $C$ defined
as the adjunction space $C = \overline{\mathbb{D}}^2 \cup_\psi \mathbb{M}$
(cf. Equation~\eqref{eq: adjunction space}) is topologically equivalent to the real projective plane $\mathbb{R}P^2$, obtained by sewing a
M\"obius strip to the boundary of a disk. The punctured crosscap
is homeomorphic to a M\"obius strip (up to the usual self-intersections
of its immersion in $\mathbb{R}^3$). Conversely, sewing a M\"obius strip
to a disk along their common boundary, using the attaching map $\psi$,
recovers the original topological crosscap $C$. We then define the crosscap as the adjunction space
\[
C \coloneqq \overline{\mathbb{D}}^2 \cup_h \overline{\mathbb{M}}
= \overline{\mathbb{D}}^2 \cup_h (\mathbb{M} \cup \partial \mathbb{M}),
\]
where $h$ is a diffeomorphism between collar neighborhoods of 
$\partial \overline{\mathbb{D}}^2$ and $\partial \mathbb{M}$. 
This extends the original attaching map $\psi$ from the primary definition. We then extend the metric $g$  across the boundary 
\[
\partial\mathbb{M}=\{(-\sqrt{2},x)\colon x\in[-\sqrt{2},\sqrt{2}]\}\cup\{(\sqrt{2},x)\colon x\in[-\sqrt{2},\sqrt{2}]\},
\]
so that the adjunction space $\mathbb{\overline{D}}^{2}\cup_{h}\overline{\mathbb{M}}$
becomes a semi-Riemannian manifold with underlying topological structure
$\mathbb{R}P^{2}$.

\textbf{~}\\ \textit{Metric on the crosscap:} We start with the square-shaped
domain $[-\sqrt{2},\sqrt{2}]\times[-\sqrt{2},\sqrt{2}]$ and impose
the antiparallel edge identifications

~

$(t,-\sqrt{2})\sim(-t,\sqrt{2})$ for $-\sqrt{2}\leq t\leq\sqrt{2}$,

$(-\sqrt{2},x)\sim(\sqrt{2},-x)$ for $-\sqrt{2}\leq x\leq\sqrt{2}$. 

\textbf{~}\\ This construction yields the non-orientable, compact
projective plane $\mathbb{R}P^{2}$, without boundary. Through continuous
deformation, $\mathbb{R}P^{2}$ can be transformed into the crosscap
\[
C\coloneqq\mathbb{\overline{D}}^{2}\cup_{h}\overline{\mathbb{M}}=\mathbb{\overline{D}}^{2}\cup_{h}(\mathbb{M}\cup\partial\mathbb{M}),
\]
with $\partial\mathbb{M}=\{(t,x)\colon t^{2}+x^{2}=1\}\cong S^{1}$.
The resulting $2$-dimensional signature-type changing manifold
$(C,g)$ in the $(t,x)$-plane carries the metric 

\[
g=(1-t^{2})(dt)^{2}+2txdtdx+(1-x^{2})(dx)^{2}.
\]

\textbf{~}\\ The determinant degenerates precisely along the hypersurface
$\partial\mathbb{M}\cong\mathbb{S}^{1}\subset C$: 
\[
\det([g_{\mu\nu}])=(1-x^{2})(1-t^{2})-(tx)^{2}=0\Longleftrightarrow t^{2}+x^{2}=1,
\]
so that 
\[
\mathcal{H}\coloneqq\{(t,x)\in C\colon t^{2}+x^{2}=1\}\subset C
\]
is the compact locus of signature change. Thus, 
\[
C\setminus\mathcal{H=}\mathbb{D}^{2}\cup\mathbb{M},
\]
where $\mathbb{D}^{2}$ has Riemannian signature and $\mathbb{M}$
has Lorentzian signature (see Figure~\ref{fig:Lightcones+Curves}).

\textbf{~}\\ Finally, the differential of the determinant is
\[
d(\det([g_{\mu\nu}]))=-2tdt-2xdx\begin{cases}
\begin{array}{c}
=0\\
\neq0
\end{array} & \begin{array}{c}
if\,x=t=0,\\
otherwise,
\end{array}\end{cases}
\]
which vanishes only at the origin $(t,x)=(0,0)$ and is non-zero everywhere
else on $\mathcal{H}$. 

Furthermore, the calculation of the radical $\textrm{Rad}_{q}$, for
$q\in\mathcal{H}$, reveals that $\dim(\textrm{Rad}_{q})=1$, and that
it is both transverse and tangent, depending on the point $q$:

\textbf{~}\\ We seek all vectors $w=(w_{1},w_{2})^{T}\in T_{q}M$
such that $g(w,\centerdot)=0$, where $T$ denotes the transpose of the matrix with respect to the chosen coordinates. Since the radical consists precisely of the null vectors of $g$ along
$\mathcal{H}$, a vector $w=(w_{1},w_{2})^{T}$ lies in $\mathrm{Rad}_{(t,x)}$ if and
only if
\[
(w_{1}, w_{2})
\begin{pmatrix}
1 - t^{2} & t x \\
t x       & 1 - x^{2}
\end{pmatrix}
\begin{pmatrix}
w_{1} \\[0.3em] w_{2}
\end{pmatrix}
= 0,
\]
which is equivalent to
\[
(1 - t^{2}) w_{1}^{2} + 2 t x\, w_{1}w_{2} + (1 - x^{2}) w_{2}^{2} = 0
\quad\Longleftrightarrow\quad
w_{1}^{2} + w_{2}^{2} + \frac{2 t x}{1 - t^{2}}\, w_{1} w_{2} = 0.
\]

Because points on the hypersurface $\mathcal{H}$ satisfy $t^{2} + x^{2} = 1$, we obtain
\[
(x w_{1} + t w_{2})^{2} = 0.
\]
Thus the null condition reduces to the single linear relation
\[
x w_{1} + t w_{2} = 0.
\]

On the other hand, differentiating the defining equation $t^{2} + x^{2} = 1$
for $\mathcal{H}$ yields the tangency condition
\[
t\, dt + x\, dx = 0.
\]
Replacing $dt$ and $dx$ by $\tfrac{dt}{d\lambda}$ and $\tfrac{dx}{d\lambda}$,
respectively (for an arbitrary curve parameter $\lambda$), we may regard them as
components of a tangent vector just like $w_{1}$ and $w_{2}$.

If the radical were tangent to $\mathcal{H}$, then both
\(
\begin{pmatrix}
w_{1}\\ w_{2}
\end{pmatrix}
\)
and
\(
\begin{pmatrix}
dt\\ dx
\end{pmatrix}
\)
would represent tangential directions, and together with the null condition
$x w_{1} + t w_{2} = 0$ we would obtain
\[
\begin{pmatrix}
x & t \\
t & x
\end{pmatrix}
\begin{pmatrix}
w_{1} \\ w_{2}
\end{pmatrix}
= 0.
\]
Nontrivial solutions exist precisely when
\[
\det
\begin{pmatrix}
x & t \\
t & x
\end{pmatrix}
= 0,
\qquad\text{i.e.}\qquad |t| = |x|.
\]
Together with $t^{2} + x^{2} = 1$, this yields $|t| = |x| = 1/\sqrt{2}$.
For all other points on $\mathcal{H}$, the radical is transverse, in agreement
with Fig.~\ref{fig:Lightcones+Curves}.
\end{proof}

The manifold $(C,g)$ is compact and neither pseudo-time orientable nor orientable. Time begins
at the surface of transition $\mathcal{H}$, defined by $t^{2}+x^{2}=1$,
but it is clear that spacetime does not originate there. Pseudo-timelike
curves can cross $\mathcal{H}$: they enter the Riemannian region
(the disk), traverse it, and re-emerge in the Lorentzian regime through
another point on $\mathcal{H}$. For example (see Figure~\ref{fig:Lightcones+Curves})
a pseudo-timelike curve $\gamma_2$ passes through point $p_2\in\mathcal{H}$,
traverses the disk in the Riemannian regime, and then re-emerges into
the Lorentzian regime through a point $q_2\in\mathcal{H}$. This curve
is future-directed. In contrast, the curve $\gamma_1$ enters the Riemannian regime at point $p_1\in\mathcal{H}$ and experiences
a time reversal upon re-entering the Lorentzian regime at point $q_1\in\mathcal{H}$.
Thus, this curve is not future-directed.

\textbf{~}\\ Finally, note that the metric 
\[
g=(1-t^{2})(dt)^{2}+2txdtdx+(1-x^{2})(dx)^{2}
\]
is not a transverse, type-changing metric with a purely transverse
radical, since the radical is of mixed character.
Consequently, we cannot derive this metric from a Lorentzian background
metric by means of a transformation function $f$. The Transformation
Prescription therefore does not apply in this case.

\begin{figure}[H]
\centering{}\includegraphics[scale=1.11]{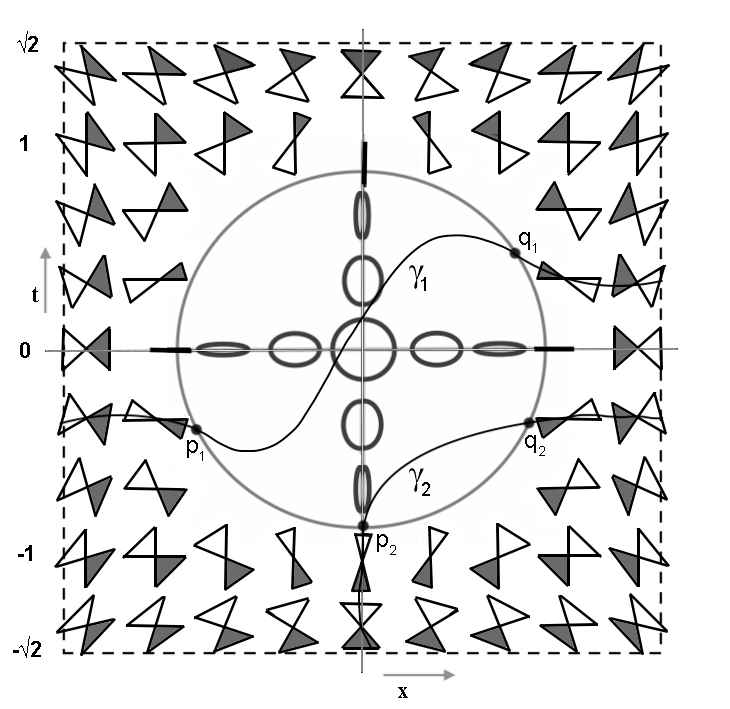}\caption{{\small{}\label{fig:Lightcones+Curves}Schematic light cone structure (not drawn to scale) for the metric $g=(1-t^{2})(dt)^{2}+2txdtdx+(1-x^{2})(dx)^{2}$. Pseudo-timelike
curves pass through $\mathcal{H}$, then go through the disk which
is the Riemannian regime, and finally re-emerge in the Lorentzian
regime through $\mathcal{H}$ again.}}
\end{figure}

\begin{cor}[Mixed character of the radical]
For the metric
\[
g = (1-t^{2})\, dt^{2} + 2tx\, dt\, dx + (1-x^{2})\, dx^{2},
\]
the radical along the hypersurface 
\[
\mathcal{H} = \{(t,x) \in \mathbb{R}^{2} : t^{2} + x^{2} = 1\}
\]
is tangent to $\mathcal{H}$ at exactly four isolated points, namely those satisfying 
\[
|t| = |x| = \frac{1}{\sqrt{2}},
\]
and is transverse to $\mathcal{H}$ at all other points.  
In particular, the radical has mixed character along $\mathcal{H}$.
\end{cor}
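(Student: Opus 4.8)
The plan is to observe that this corollary is essentially a restatement of part (3) of Proposition~\ref{prop:crosscap-proposition}, so the proof should be a direct computation that reproduces the key algebraic identity established in the proof of that proposition, now phrased as a standalone claim about the four points. First I would set up the radical condition: a vector $w=(w_1,w_2)^T$ lies in $\mathrm{Rad}_q$ precisely when it is $g$-null along $\mathcal{H}$, which by the argument in the proposition reduces, using $t^2+x^2=1$, to the perfect-square factorization $(xw_1+tw_2)^2=0$, hence to the single linear equation $xw_1+tw_2=0$. This identifies the radical direction unambiguously at every point of $\mathcal{H}$ where the coefficients $(x,t)$ do not both vanish (and the origin does not lie on $\mathcal{H}$), confirming $\dim\mathrm{Rad}_q=1$.

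Next I would impose tangency. Differentiating $t^2+x^2=1$ gives the tangency relation $t\,dt+x\,dx=0$, so a tangent vector to $\mathcal{H}$ satisfies $t\,v_1+x\,v_2=0$. The radical is tangent exactly when the null direction $xw_1+tw_2=0$ coincides with a tangent direction $tw_1+xw_2=0$; equivalently, when the linear system with matrix $\left(\begin{smallmatrix} x & t\\ t & x\end{smallmatrix}\right)$ admits a nontrivial solution $w$. This happens iff the determinant $x^2-t^2$ vanishes, i.e.\ $|t|=|x|$. Combining $|t|=|x|$ with the constraint $t^2+x^2=1$ forces $t^2=x^2=\tfrac12$, giving exactly the four points $(\pm\tfrac{1}{\sqrt2},\pm\tfrac{1}{\sqrt2})$. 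At every other point of $\mathcal{H}$ the determinant $x^2-t^2\neq0$, so the only solution is $w=0$ and the radical direction is therefore transverse to $\mathcal{H}$.

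Finally I would conclude that the radical is tangent at precisely these four isolated points and transverse at all remaining points, so by Definition~\ref{def:Pseudo-time-orientable}'s companion (the Character of the radical definition) the radical has mixed character along $\mathcal{H}$. Since each step is a short linear-algebra computation already carried out within the proof of Proposition~\ref{prop:crosscap-proposition}, there is no genuine obstacle here; the main point requiring care is simply verifying that the two linear conditions (null and tangent) are genuinely independent away from $|t|=|x|$, which is exactly what the nonvanishing of the $2\times2$ determinant guarantees. One may add the brief remark that these four points are the intersection of $\mathcal{H}$ with the diagonals $x=\pm t$, which makes the measure-zero and isolatedness assertions transparent.
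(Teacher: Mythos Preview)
Your proposal is correct and follows essentially the same approach as the paper: the corollary carries no separate proof there, being an immediate restatement of the computation already performed inside the proof of Proposition~\ref{prop:crosscap-proposition}, and you have reproduced that computation faithfully (the perfect-square reduction $(xw_1+tw_2)^2=0$ on $\mathcal H$, the tangency condition $t\,w_1+x\,w_2=0$, and the $2\times2$ determinant criterion $x^2-t^2=0$). The only cosmetic point is that your reference to the ``Character of the radical'' definition should cite that definition directly rather than as a companion to Definition~\ref{def:Pseudo-time-orientable}.
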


\begin{example}
The $2$-dimensional analogues of the ``no boundary''
proposal spacetimes, obtained by cutting a sphere
along its equator and attaching it to half of de Sitter space, exhibit
a radical that is always transverse along the signature-changing locus,
see Figure~{\ref{fig: 2D No Boundary Model}. Such
manifolds also cannot be constructed via the Transformation Prescription
from a Lorentzian manifold $(M,g)$. For further discussion, see~\cite{Hasse + Rieger-Transformation}.}

\begin{figure}[H]
\centering{}\includegraphics[scale=0.66]{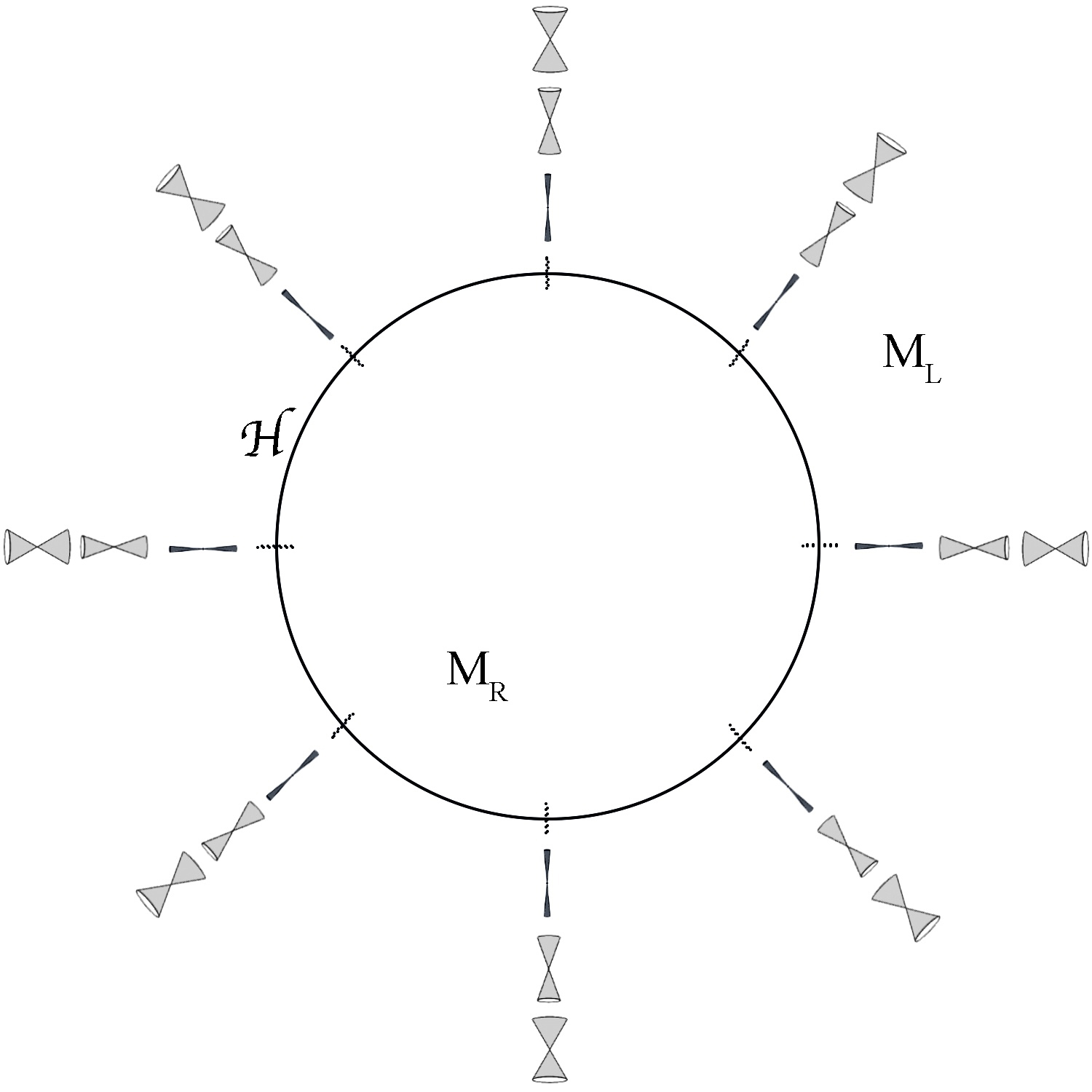}\caption{{\small{}\label{fig: 2D No Boundary Model}The causal structure of
the $2$-dimensional ``no boundary'' proposal spacetime.}}
\end{figure}
\end{example}

\vspace{0.001\baselineskip}

\begin{example}\label{Example-8}
Consider the $2$-dimensional signature-type-changing crosscap manifold
\[
\overline{\mathbb{D}}^{2} \cup_{h} (M \cup \partial M),
\]
constructed by attaching a M\"obius strip $M$ to the boundary of a disk
representing the Riemannian sector. This manifold is equipped with the metric
\[
g = (1-t^{2})\,(dt)^{2} + 2 t x\, dt\, dx + (1-x^{2})\, (dx)^{2},
\]
see Subsection~\ref{subsec:Cross-Cap-metric}.
According to Definition~\ref{def:Pseudo-space-orientable}, in order for the
crosscap to be pseudo-space orientable, we must exhibit a set of
$2-1 = 1$ pointwise orthonormal spacelike vector fields on the Lorentzian
region of
\[
\overline{\mathbb{D}}^{2} \cup_{h} (M \cup \partial M).
\]
The crosscap is closed and has Euler characteristic $\chi = 1$.\footnote{Here,
``closed'' is meant in the manifold sense of ``a compact manifold without
boundary'', not in the topological sense of ``the complement of an open
subset of $\mathbb{R}^{n}$''.}
Since a closed, connected manifold admits a non-vanishing vector field
if and only if its Euler characteristic is zero, there is no global
non-vanishing vector field on the crosscap. Nevertheless, the M\"obius strip
$M \cup \partial M$ admits a continuous non-vanishing spacelike
$(n-1)$-frame field. Hence, the manifold analyzed in
Subsection~\ref{subsec:Cross-Cap-metric} is indeed pseudo-space orientable
as well as pseudo-time orientable.
\end{example}

\section{Conclusion}

In this work, we constructed and analyzed explicit models of singular
semi-Riemannian manifolds undergoing signature-type change, with particular
emphasis on non-orientable topologies modeled on the M\"obius strip and the
crosscap. The examples show that non-orientable manifolds may support
signature-changing metrics, but that their global topology imposes substantial
restrictions on the applicability of standard transformation prescriptions. The analysis of our models yielded three main results.

~

First, in the rotating Lorentzian background, the stationary stripes exhibit a
one-way causal-barrier structure. With respect to the global time orientation
determined by
\[
V=(\cos\varphi)\partial_t-(\sin\varphi)\partial_x,
\qquad \varphi=\pi x,
\]
future-directed causal curves which enter an even stationary stripe \(M_{2k}\)
cannot leave it, whereas future-directed causal curves cannot enter an odd
stationary stripe \(M_{2k-1}\). Thus the causal trapping is a feature of the
Lorentzian background geometry and its time orientation, rather than a phenomenon
occurring at the hypersurface of signature change.

~

Second, the M\"obius-strip constructions show that pseudo-orientability is weaker
than ordinary orientability. In particular, a transverse signature-type changing
manifold may be both pseudo-time orientable and pseudo-space orientable while
failing to be orientable as a smooth manifold. This illustrates that the
pseudo-orientability notions relevant to signature-changing geometry do not
recover ordinary orientability.

~

Third, the compact crosscap model exhibits a fundamental obstruction to the
Transformation Prescription. At the hypersurface of signature change, the radical
of the induced metric has mixed character: it is transverse at some points and
tangent at others. Hence the metric does not satisfy the transverse-radical
condition required by the Transformation Theorem. Consequently, the crosscap
provides a compact non-orientable example in which the standard ansatz
\[
\tilde g = g + f(V^\flat\otimes V^\flat)
\]
cannot produce a signature-type changing metric with an everywhere transverse
radical.

~ 

Taken together, these examples show that signature change is not governed solely
by local coordinate models or by the choice of interpolation function \(f\).
Rather, global topological features, including non-orientability and Euler
characteristic, impose intrinsic restrictions on the existence of transverse
type-changing metrics. This suggests that any extension of the Transformation
Prescription to compact non-orientable manifolds must allow for more general
radical behavior, including mixed or tangent radicals.

\textbf{~}\\  Possible directions for future work include developing transformation procedures
adapted to non-transverse radicals, clarifying the role of characteristic classes
in obstructing transverse type change, and studying how boundary or junction
conditions should be formulated when the radical changes character along the
signature-change locus.

~
\begin{acknowledgement*}
The author acknowledges partial support of the SNF Grant No. 200020-192080.
This research was (partly) supported by the NCCR SwissMAP, funded
by the Swiss National Science Foundation. The author sincerely thanks Wolfgang Hasse for his collegiality, constant support, numerous discussions, and incisive critique, all of which greatly contributed to this work.
\end{acknowledgement*}

\section*{Declarations}

\noindent \text{Conflict of Interest:} The authors declare that they have no conflicts of interest.

\smallskip

\noindent \text{Ethical Statement:} This article is a purely theoretical work; no ethical approval was required for the research described.

\smallskip

\noindent \text{Data Availability:} Data sharing is not applicable to this article as no datasets were generated or analyzed during the current study.

\end{document}